\newcommand{\BlackBoxes}{\global\overfullrule5pt}
\providecommand{\U}[1]{\protect\rule{.1in}{.1in}}
\newcommand{\R}{\mathbb{R}} 
\newcommand{\N}{\mathbb{N}} 
\newcommand{\PP}{\mathbb{P}}
\newcommand{\EE}{\mathbb{E}}
\DeclareMathOperator*{\argmax}{argmax}
\newtheorem{theorem}{Theorem}
\newtheorem{proposition}[theorem]{Proposition}
\theoremstyle{definition}
\newtheorem{example}[theorem]{Example}
\newtheorem{remark}[theorem]{Remark}
\numberwithin{equation}{section} \numberwithin{theorem}{section}
\def\0{\kern0pt\-\nobreak\hskip0pt\relax}
\def\makeoverbar#1#2#3#4#5#6#7{ \setbox0=\hbox{$\m@th#2\mkern#5mu{{}#3{}}\mkern#6mu$} 
\setbox1=\null \dimen@=#4\fontdimen8#13 \dimen@=3.5\dimen@
\advance\dimen@ by \ht0 \dimen@=-#7\dimen@ \advance\dimen@ by \wd0
\ht1=\ht0 \dp1=\dp0 \wd1=\dimen@
\dimen@=\fontdimen8#13 \fontdimen8#13=#4\fontdimen8#13
\rlap{\hbox to \wd0{$\m@th\hss#2{\overline{\box1}}\mkern#5mu$}}
\fontdimen8#13=\dimen@}
\def\mylabel#1#2{{\def\@currentlabel{#2}\label{#1}}}
\begin{document}

\renewcommand*{\arraystretch}{1.5}

\begin{center}
{\Large\bf Yet Another Distributional Bellman Equation}
\end{center}

\begin{center}
{\bf \large  Nicole B\"auerle$^{a}$\footnote{Corresponding author}, Tamara G\"oll$^{a}$, Anna Ja\'skiewicz$^{b}$ }
\end{center}
\vspace*{0.5cm}
\noindent$^{a}$Department of Mathematics, Karlsruhe Institute of Technology, 
Karlsruhe, Germany,
{\footnotesize  email: {\it nicole.baeuerle@kit.edu}, {\it tamara.goell@kit.edu}}\\

\noindent$^{b}$Faculty of Pure and Applied Mathematics, Wroc{\l}aw University of Science and Technology,  Wroc\l aw, Poland,
{\footnotesize  email: {\it anna.jaskiewicz@pwr.edu.pl}}\\

\begin{center}
 \today
\end{center}

  \noindent
{\bf Keywords.} Dynamic programming,     Markov Decision Process, Bellman Equation, Non-Markovian Process                   
    \\

\noindent
{\bf Abstract. }We consider non-standard Markov Decision Processes (MDPs) where the target function is not only a simple expectation of the accumulated reward. Instead, we consider rather general functionals of the joint distribution of terminal state and accumulated reward which have to be optimized. For finite state and compact action space, we show how to solve these problems by defining a lifted MDP whose state space is the space of distributions over the true states of the process. We derive a Bellman equation in this setting, which can be considered as a distributional Bellman equation. Well-known cases like the standard MDP and quantile MDPs are shown to be special examples of our framework. We also apply our model to a variant of an optimal transport problem. 

\section{Introduction}
Classical MDP theory is concerned with the maximization of expected accumulated reward $\EE R_N$ over $N$ discrete stages or an infinite time horizon where the transition law of the process can be controlled. Thus, we are interested in the first moment of the random variable $R_N$ representing the accumulated reward and aim to maximize this. For the standard theory, solving these problems with the help of a Bellman equation, a recursive equation for the optimal value of the problem depending on the time horizon, see \cite{puterman2014markov,lerma96,br11}. 

However, later there has been increased interest in criteria which also account for risk in the sense of deviation from the mean or in higher moments of the accumulated reward. Hence, in criteria which address other aspects of the underlying return distribution. A widely discussed criterion for example is the risk-sensitive reward $-\EE \exp(-\lambda R_N)$ which has first been addressed by \cite{howard1972risk,jaquette1976utility}. It can be considered as a criterion taking all moments of the accumulated reward into account since
$$ -\EE \exp(-\lambda R_N) = - \sum_{k=0}^\infty \frac{(-\lambda)^k\EE(R_N^k)}{k!}.$$
Risk-sensitive Markov decision problems can again be solved using a Bellman equation, but the value function is time-dependent even in the stationary infinite horizon setting (see the aforementioned references). For Q-learning in this setting, see \cite{borkar2002q,mihatsch2002risk,borkar2010learning,shen2014risk}.
Further extensions include more general expected utility (\cite{chung1987discounted,bauerle2014more}), the application of risk measures (\cite{ruszczynski2010risk,bauerle2011markov,shen2013risk,bauerle2021minimizing,moghimi2025beyond}) or the consideration of mean-variance problems (\cite{mannor2011mean,cui2014unified,bauerle2024time,bauerle2025mean}). In the first two cases, it is again possible to solve the problem with a Bellman equation on an extended state space, where an auxiliary variable like the 'accumulated reward so far' has to be added to the natural state of the process. For mean-variance problems, one can take advantage of the fact that the variance can be represented as an optimization problem. A recent overview can be found in \cite{bauerle2024markov}. Besides this, other papers considered quantile optimization in the sense that $\PP(R_N \ge t)$ for fixed $t\in\R$ has to be optimized, see e.g. \cite{filar1995percentile,wu1999minimizing,chow2015risk,gilbert2017optimizing,li2022quantile}. In this case, the problem can again be solved using a state space extension. In particular, the latter three papers are also concerned with finding efficient algorithms to solve MDPs with quantile criteria. In \cite{marthe2024beyond} the authors discuss among others which utility problems may be solved by dynamic programming. 


On the other hand, there is a stream of literature which deals with distributional reinforcement learning. In this theory, the aim is to learn the distribution of the accumulated returns under a stationary Markovian policy (\cite{bellemare2017distributional,bdr2023,rowland2019statistics}) or equivalently to learn the quantile function (\cite{dabney2018distributional,dabney2018implicit}) and use parametric families to achieve this efficiently. This is similar to Q-learning for a fixed policy. Optimization enters the scene by choosing greedy actions according to an appropriate functional which relates to the target quantity. However, in general this theory applies to problems where the classical MDP theory can be used (maybe on an extended state space) and where the optimal policy can be found within the deterministic stationary policies (\cite{lyle2019comparative} compares traditional RL (reinforcement learning) to distributional RL). 

In this paper, we combine the distributional approach with a very general optimization target involving the joint distribution of current state and accumulated reward. More precisely, if $F_N^\sigma$ is the joint distribution of the accumulated reward $R_{N-1}$ and current state $X_N$ under policy $\sigma$, we are interested in maximizing $H(F_N^\sigma)$ where $H$ maps distributions to real values. Hence, $H$ could map the distribution on the expectation of $R_{N-1}$, yielding a classical MDP, to a quantile, to a probability or to the distance to a given distribution. This gives a very flexible target, comprising, in principle, all previously considered cases.  Also situations with constraints may be handled (see e.g. \cite{altman2021constrained, basak2001value}). Of course, and this is crucial to note, we cannot expect that we can restrict the search for optimal policies to Markovian policies, nor can we restrict to deterministic policies in general. However, we can show that optimal policies depend only on the current distribution of state and accumulated reward so far. Thus, they depend on the history of the process, but only through a certain sufficient statistic. We can also identify cases where the optimal decision rule is not randomized. This is achieved by introducing a 'lifted' MDP with states given by distributions. The approach is inspired by recent papers considering  MDPs with distributions as states (see e.g. \cite{bauerle2024time}). In order to keep the exposition simple, we restrict here to finite state and action spaces and in Sec. \ref{sec:special} to finite state and compact action space. The recent paper \cite{pires2025optimizing} considers a related question, however is concentrating on infinite horizon discounted reward. They consider a possibly infinite state space, but a finite action space. In their main theorem, the target function $H$ has to satisfy some indifference properties and they do not define a 'lifted' MDP, hence there is no typical value function. However, they provide a large number of different applications ranging from Conditional Value at Risk optimization to Deep $\eta$ networks. 

The {\em contributions} of our paper are: (i) We introduce in a proper mathematical way a very general optimization problem involving the  joint distribution of the accumulated reward  and current state of a Markov Decision process. (ii) We introduce a 'lifted' MDP to solve the problem recursively. We do this for finite state and finite/compact action spaces. (iii) We show that cases previously treated in the literature like classical MDP and quantile MDP are included as special cases.   (iv) For the infinite horizon problem, we discuss existence and reduction of optimal policies. (v) We give a new application of approximating a given distribution by a random walk which we solve in a naive dynamic programming fashion. This is a modification of an optimal mass transportation problem.

The {\em outline} of the paper is as follows: In the next section we introduce the model with finite  state and action spaces and derive the recursive solution algorithm under a continuity assumption on the objective mapping $H$. In Sec. \ref{sec:special} we consider the case of a compact action space but restrict to distributions of the terminal state plus expected reward. In order to obtain optimal policies, we need a further continuity property of the transition law. In Sec. \ref{sec:infi} we briefly discuss the infinite horizon problem in our setting. Using an example, we show that optimal stationary policies cannot be expected. However, we prove the existence of optimal policies when the target function is Wasserstein-Lipschitz and can reduce them to a certain set of policies. In the last section we consider different applications: We show that the usual Bellman equation is included for the classical objective of maximizing the expected accumulated reward. We also consider a quantile MDP where we show a similar result. The last application is non-standard: We  determine the transition probabilities of a Markov chain in such a way that, at a fixed time point, a weighted criterion of distance to a  given distribution and expected transportation cost is minimized. We solve this problem using naive dynamic programming and discuss the results.

\section{The model}\label{sec:model}

By $\R$ ($\N$) we denote the set of all real 
numbers (positive integers). Let $Y$ be a Borel space, i.e., a Borel subset of a complete separable metric space with its Borel $\sigma$-algebra 
$\mathcal{B}(Y).$ Then, $P(Y)$  stands for  the set of all probability distributions on  
$(Y,\mathcal{B}(Y)).$ We equip $P(Y)$ with the weak topology, that is, the coarsest topology for which the mapping $\mu\to\int_Y fd\mu$ is continuous for every bounded and continuous function $f: Y\to\R.$

Suppose we are given a classical Markov decision model with finite state and action space. We are first interested in models with finite time horizon $N\in\N.$ More precisely, the model is described by the following items:
\begin{itemize}
\item[(i)] $E$  is the finite state space,
\item[(ii)] $A$
 is the finite action space,
 \item[(iii)] $r:E\times A \to \R$ is the  one-stage reward,
 \item[(iv)] $q$ is the transition probability from $E\times A$ to $E,$
 \item[(v)] $g:E\to\R$ is the terminal reward.
\end{itemize}
 Thus,
let $(\Omega, \mathcal{F})$   be the measurable space with 
$\Omega= E\times (A\times  E)^N$ and 
	$\mathcal{F}$ the corresponding power set. We define the set of all histories by
 $H_n:= E\times (A\times E)^n,$ i.e., $h_n=(x_0,a_0,\ldots ,x_n)$ (for $n=1,\ldots,N)$ describes the sequence of states and actions which have occurred up to time $n.$
 For $n=0$, we have $H_0=E$. The state process is then given by the random variables $X_n:\Omega \to E$ with $X_n(h_N)=x_n$ and the action process  by the random variables $A_n:\Omega \to A$ with $A_n(h_N)=a_n.$    A policy $\sigma$ is a sequence
 $(\sigma_n)_{n=0}^{N-1}$ of history dependent decision rules, where each decision rule  $\sigma_n $ specifies the probability distribution $\sigma_n(\cdot| h_n)$ on the action space $A$ for
 $h_n \in H_n$ and $n=0,\ldots, N-1.$
We denote this set of policies by $\Pi_N.$  
It is well-known that  a  policy 
$\sigma=(\sigma_n)_{n=0}^{N-1}\in \Pi_N,$ an  initial distribution $\nu$ and the transition probability $q$ induce a  probability distribution $\PP_\nu^\sigma$ on $(\Omega,\mathcal{ F})$ 
(see p. 23 in \cite{puterman2014markov}) so that 
 \begin{eqnarray*}
 && \PP_\nu^\sigma(X_0 =x_0, A_0=a_0, X_1=x_1,\ldots ,X_N=x_N)=\\
  & & \nu(x_0) \sigma_0(a_0|x_0) q(x_1|x_0,a_0) \sigma_1(a_1|x_0,a_0,x_1) \cdot\ldots \cdot \sigma_{N-1}(a_{N-1}|h_{N-1}) q(x_{N}|x_{N-1},a_{N-1}).
    \end{eqnarray*}
In classical MDP theory, we are interested in maximizing the expected reward of the system over the time horizon when the initial distribution is $\nu$. Thus, we define  for $n=0,1,\ldots ,N-1$ 
$$ R_n := \sum_{k=0}^n r(X_k,A_k) \quad \mbox{and} \quad R_N := \sum_{k=0}^{N-1} r(X_k,A_k)+g(X_N).$$
For a fixed initial distribution $\nu$, one then tries to solve
$$ V(\nu) = \sup_{\sigma\in \Pi_N} \EE_{\nu}^\sigma[R_N]$$
where $\EE_{\nu}^\sigma$ denotes the expectation w.r.t.\ $\PP_{\nu}^\sigma$.
It is well-known that the optimal policy (which exists here, since state and action spaces are finite) can be found among the deterministic Markovian decision rules and that it does not depend on $\nu$. 
More precisely, when we define $V_N(x):= g(x)$ and for $n=N-1,\ldots,0$
\begin{equation}\label{eq:OEclassic0}
    V_n(x)= \max_{a\in A} \big\{r(x,a) +
    \sum_{x'}  V_{n+1}(x')q(x'|x,a)\big\},
\end{equation} 
then $V(\nu)= \sum_{x} V_0(x) \nu(x)$ and the maximizers in \eqref{eq:OEclassic0} define an optimal (deterministic) policy.

In this paper however, we generalize the 
optimization criterion to one where the joint 
distribution of the accumulated reward and terminal 
state is involved. Since state and action spaces 
are finite, the random variables $R_0,\ldots, R_N$ 
take only a finite number of possible values. In 
what follows, we denote by  $S$ the finite set of 
all possible realizations of the random variables 
$R_0,R_1,\ldots ,R_N$. 
For a fixed policy $\sigma\in \Pi_N$, let  $F_n^{\sigma}$ be the joint distribution of $(X_{n},R_{n-1})$, i.e.,
$$ F_n^\sigma(x,s) = \PP_\nu^\sigma(X_{n}=x,R_{n-1}=s), \quad (x,s)\in E\times S, \ n\ge 1.$$ 
Obviously, this distribution depends on the chosen policy $\sigma$ (and the initial distribution $\nu$ which is fixed and thus not part of the notation).  Let now $H:P(E\times S) \to 
\R$ be an arbitrary functional. The aim is to solve
the following optimization problem
\begin{equation}\label{eq:prob1}
    \sup_{\sigma\in \Pi_N} H(F_N^\sigma).
\end{equation} 
The solution in (\ref{eq:prob1})
 is called {\it a value function.}  \\

\begin{example} \label{exam1}
 Let us consider some special cases of our setting. Let $F\in P(E\times S).$
\begin{itemize}
\item[a)] Of course the classical case is 
included by defining 
$$H(F)=\sum_{x,s} (g(x)+s) F(x,s).$$
Obviously $H(F_N^\sigma)= \EE_\nu^\sigma[R_N] $ in 
this case and it is just the expected accumulated 
reward of the system and we can use the standard 
Bellman equation \eqref{eq:OEclassic0} to solve the 
problem.
\item[b)] When we choose 
$H(F)=  \sum_{x,s} F(x,s) \chi(x,s),$ where
 $$ \chi(x,s):= \left\{ \begin{array}{cl}
    1 , & \mbox{ if } s+g(x)\ge t  \\
    0 , & \mbox{ else }
 \end{array}\right.$$ for a fixed $t\in\R$, then  $H(F_N^\sigma)= \PP^\sigma_\nu(R_{N}\ge t) $ is the quantile of the terminal accumulated reward. By varying the function $\chi$ we can treat the probability that the accumulated reward ends up in different areas.
\item[c)]   Let $E\subset \R$. If $W_1$ is the Wasserstein distance between the distributions, then we might consider
$$H(F) = W_1(F(\cdot,S),G)=  \int_{\R} |F_c(t)-G_c(t)|dt,$$
where $G\in P(\R)$ is the given target distribution. Here, $F_c$ and $G_c$ denote the corresponding cumulative distributions.
Then the aim is to choose the policy 
$\sigma$ such that the distribution of $X_N$ is as 
close as possible to  
$G$, i.e., we wish to find $\inf_{\sigma\in\Pi_N} H(F^\sigma_N).$ The distance $W_1$ may be replaced by any other reasonable distance between distributions. 
\end{itemize}
\end{example}

\noindent 
In order to solve problems like \eqref{eq:prob1}, we have to lift the MDP to a more general state space which is given by joint distributions of the accumulated reward and original state of the process. Moreover, actions in the lifted MDP are transition kernels from the augmented state $(x,s)$ of the process to the action space.  Thus, we define
$$ \Pi^M := \{ \pi : E\times S \to P(A) \},$$
where $S$ is as before the finite set of all possible realizations of the accumulated rewards.
More precisely, we formally define the {\em lifted MDP} which is a deterministic dynamic control problem by the following data:

\begin{itemize}
\item[(i)] $P(E\times S)$  is the  state space  where the interpretation of a  state $F_n\in P(E\times S)$ at time point $n$ is given as the joint distribution of $(X_{n},R_{n-1})$. Note here that since state and action spaces are finite, $F_n$ is ultimately a discrete distribution on a finite number of points.
\item[(ii)] $ \Pi^M$
 is the action space.
 \item[(iii)] The  one-stage reward is zero.
 \item[(iv)] $H:P(E\times S) \to \R$ is the  terminal reward.
 \item[(v)] The transition function $T:P(E\times S) \times \Pi^M \to P(E\times S)$  is given by 
\begin{align}\label{eq:Toperator}
    T^\pi(F)(x',s') :=  T(F,\pi)(x',s')=\sum_{(x,s,a) \, : \, r(x,a)=s'-s}  q(x'|x,a)\pi(a|x,s) F(x,s),
\end{align} 
where $(x',s') \in E\times S$ and $q$ and $r$ are the data from our initial MDP defined at the beginning of this section.
\end{itemize}
The previous  data defines a lifted MDP which is indeed a {\em deterministic} dynamic control problem. Policies in this lifted model are denoted by $(f_n)$ where $f_n : P(E\times S) \to \Pi^M. $ Though state and action spaces are finite in the original formulation, this is no longer true in the lifted MDP.  Indeed, the price we have to pay for the model to be deterministic now is that the state and action spaces consist of probability distributions and transition kernels, respectively. But in the end this also implies that there is no randomization over the lifted action space necessary in order to improve the value.

We first prove the following crucial connection between the original MDP and the lifted MDP. 
 The notation $\nu\otimes\mu $ stands 
for the product measure between the two probability 
distributions $\mu,\nu.$ 

\begin{proposition}\label{prop:1}
For every policy $\sigma=(\sigma_n)_{n=0}^{N-1}$ in
the original MDP model, there exists an action
sequence $(\pi_0,\ldots,\pi_{N-1})$ in the lifted 
MDP such that
$$\PP^\sigma_\nu(X_N=x, R_{N-1}=s) = 
T^{\pi_{N-1}}\circ T^{\pi_{N-2}} \circ \ldots \circ 
T^{\pi_0}(F_0)(x,s),\quad (x,s)\in E\times S,  $$
where $F_0= \nu \otimes \delta_0$ and $T^\pi$ is 
the operator defined in \eqref{eq:Toperator}.
\end{proposition}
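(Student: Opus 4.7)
The natural approach is induction on $n$, together with an explicit construction of $\pi_n$ as a conditional kernel extracted from $\sigma$.

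\textbf{Setup of the induction.} For each $n=0,\ldots,N-1$, let $F_n^\sigma \in P(E\times S)$ denote the joint law of $(X_n,R_{n-1})$ under $\PP_\nu^\sigma$ (with the convention $R_{-1}=0$, so $F_0^\sigma = \nu\otimes\delta_0$). The goal is to produce $\pi_n\in\Pi^M$ such that $F_{n+1}^\sigma = T^{\pi_n}(F_n^\sigma)$; iterating then yields the claim.

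\textbf{Construction of $\pi_n$.} On the (finite) set $\{(x,s)\in E\times S:\PP_\nu^\sigma(X_n=x,R_{n-1}=s)>0\}$, define
\[
\pi_n(a\mid x,s) := \PP_\nu^\sigma\bigl(A_n=a\,\big|\,X_n=x,R_{n-1}=s\bigr),
\]
and extend $\pi_n(\cdot\mid x,s)$ arbitrarily to an element of $P(A)$ on the remaining $(x,s)$; the latter choice is immaterial because such points carry no mass in $F_n^\sigma$ and therefore contribute zero to $T^{\pi_n}(F_n^\sigma)$.

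\textbf{The one-step identity.} For any $(x',s')\in E\times S$, I would condition on $(X_n,R_{n-1},A_n)$, using that $R_n=R_{n-1}+r(X_n,A_n)$ is a deterministic function of these variables and that, by the Markov property of the original dynamics, the conditional law of $X_{n+1}$ given the entire past up through $A_n$ is $q(\cdot\mid X_n,A_n)$:
\[
\PP_\nu^\sigma(X_{n+1}=x',R_n=s') = \sum_{x,s,a:\ r(x,a)=s'-s} q(x'\mid x,a)\,\PP_\nu^\sigma(A_n=a\mid X_n=x,R_{n-1}=s)\,F_n^\sigma(x,s).
\]
Plugging in the definition of $\pi_n$ shows that the right-hand side is exactly $T^{\pi_n}(F_n^\sigma)(x',s')$, proving the inductive step.

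\textbf{Anticipated obstacles.} The substantive points are (i) verifying that $(X_n,R_{n-1})$ genuinely serves as a sufficient statistic for the one-step conditional law, which reduces to a careful invocation of the tower property together with the fact that $R_{n-1}$ is $\mathcal{F}_n$-measurable, and (ii) handling null-mass points in the definition of $\pi_n$; the latter is only a bookkeeping issue in the finite setting, but it needs to be acknowledged so that $\pi_n$ is well-defined as an element of $\Pi^M$. The conclusion then follows by composing the identities $F_{n+1}^\sigma = T^{\pi_n}(F_n^\sigma)$ for $n=0,\ldots,N-1$ and starting from $F_0^\sigma=\nu\otimes\delta_0$.
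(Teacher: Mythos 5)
Your proposal is correct and follows essentially the same route as the paper: induction on the horizon with $\pi_n(a\mid x,s):=\PP_\nu^\sigma(A_n=a\mid X_n=x,R_{n-1}=s)$ and the same one-step conditioning identity. The only addition is your explicit treatment of null-mass points $(x,s)$, which the paper leaves implicit but which is indeed needed for $\pi_n$ to be a well-defined element of $\Pi^M$.
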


\begin{proof}
The proof is by induction over the length of the 
time horizon. First consider $N=1.$ Then
\begin{align*}
\PP^\sigma_\nu(X_1=x', r(X_0,A_0)=s') = 
\sum_{(x,a)\; :\; r(x,a)=s'} q(x'|x,a) 
\sigma_0(a|x) \nu(x) = T^{\pi_0}(F_0)(x',s')  
\end{align*}
where $\pi_0(a|x,s)=\pi_0(a|x,0) := \sigma_0(a|x).  $
Now suppose the statement is correct up to time 
$n \le N-1.$ For abbreviation we denote 
$F_n^{(\pi_0,\ldots,\pi_{n-1})} = T^{\pi_{n-1}}\circ T^{\pi_{n-2}} \circ 
\ldots \circ T^{\pi_0}(F_0)$ for all $n\le N-1.$ 
We have to show that
$\PP^\sigma_\nu( X_{n+1}=x, R_{n}=s)= 
F_{n+1}^{(\pi_0,\ldots,\pi_{n})}(x,s)$. 
    Thus, we obtain by using the induction hypothesis in the second equation:
    \begin{align*}
      &\PP^\sigma_\nu(X_{n+1}=x', R_n=s') = \\
    &  \sum_{x,s}  \PP^\sigma_\nu(X_{n+1}=x', R_n=s' | X_n=x, R_{n-1}=s) \PP^\sigma_\nu( X_n=x, R_{n-1}=s) = \\
      & \sum_{x,s}  \PP^\sigma_\nu(X_{n+1}=x', R_n=s' | X_n=x, R_{n-1}=s) F_n^{(\pi_0,\ldots,\pi_{n-1})}(x,s)=\\
    & \sum_{(x,s,a)  \; :\; r(x,a)=s'-s}   q(x'|x,a) \PP^\sigma_\nu(A_{n}=a | X_n=x, R_{n-1}=s) F_n^{(\pi_0,\ldots,\pi_{n-1})}(x,s)=\\
      &  \sum_{(x,s,a) \; :\; r(x,a)=s'-s}q(x'|x,a)\pi_n(a|x,s) F_n^{(\pi_0,\ldots,\pi_{n-1})}(x,s)  =  T^{\pi_n}(F_n^{(\pi_0,\ldots,\pi_{n-1})})(x',s'),
    \end{align*}
   where we define $ \pi_n(a|x,s) := \PP^\sigma_\nu(A_{n}=a | X_n=x, R_{n-1}=s) $. Obviously, the definition of $\pi$ depends on $\sigma.$
   This proves the statement.
\end{proof}

\begin{remark} \label{rem23}
    Proposition \ref{prop:1} may appear surprising at first sight, since there is an arbitrary (history-dependent) policy on the left-hand side and a 'Markov' policy (in the sense of the lifted MDP) on the right hand side. However, it is well-known that in Markov decision processes, for any history dependent policy, we can always find  a Markov policy such that the distribution of state and action at every time point (i.e.,\ the marginal distributions) coincide, see e.g. Theorem 2 in \cite{derman1966note} or Corollary~2.1 in \cite{kallenberg2002finite}. This is essentially what also happens in Proposition 2.1.
\end{remark}

 On the other hand, given any action sequence $(\pi_n)_{n=0}^{N-1}$ it is immediate (since the accumulated reward up to time $n-1$ is a function of the history up to that time)  that by choosing $\sigma_n(\cdot|h_n) := \pi_n(\cdot|x_n,s_{n-1}), $ we can construct from any sequence $(\pi_n)_{n=0}^{N-1}$ a sequence $(\sigma_n)_{n=0}^{N-1}$ such that the equality in Proposition \ref{prop:1} holds. As a result, we can write problem \eqref{eq:prob1} as follows:
\begin{equation}\label{prob:II}
    \sup_{\sigma\in \Pi_N} H(F_N^\sigma) = \sup_{(\pi_0,\ldots, \pi_{N-1})} H\big(T^{\pi_{N-1}}\circ T^{\pi_{N-2}} \circ \ldots \circ T^{\pi_0}(F_0)\big).
\end{equation} 
The right-hand side in (\ref{prob:II}) is a 
deterministic dynamic control problem, which can be 
solved with the help of the value functions $J_n$, 
where 
\begin{align}
  \nonumber  J_N(F) & := H(F),\\ \label{eq:OE}
    J_n(F) &:= \sup_{\pi\in \Pi^M} J_{n+1}(T^\pi(F)),\quad n=0,1,\ldots ,N-1, 
\end{align}
for $F\in P(E\times S).$
We make the following assumption:
\begin{description}
\item[(C1)] The mapping $F\to H(F)$ is upper 
semicontinuous, that is,
$\limsup_{k\to\infty} H(F_{(k)}) \le H(F)$
for any sequence $(F_{(k)})$ converging to $F$ in
$P(E\times S)$ as $k\to\infty,$ i.e., 
$F_{(k)}(x,s)\to F(x,s)$ for every $(x,s)\in E\times S.$
\end{description}

\noindent
Note that this condition is satisfied for all cases in  Example \ref{exam1}.

We obtain the following result:

\begin{theorem}\label{theo:finite_main}
Assume (C1), i.e., $H$ is upper semicontinuous. 
\begin{itemize}
    \item[a)] If $(J_n)$ is computed according to \eqref{eq:OE}, then $J_0(F_0)$
    is the value function of problem \eqref{eq:prob1}, i.e.,
    $J_0(F_0)= \sup_{\sigma\in \Pi_N} H(F_N^\sigma).$
    \item[b)] Maximizers $(f_0^*,\ldots,f_{N-1}^*)$ in the recursion of \eqref{eq:OE} exist, i.e., for $n=0,1,\ldots,N-1$
    $$ f_n^*(F) = \argmax_\pi J_{n+1}(T^\pi(F)),\; F \in P(E\times S).$$
    \item[c)] Define the following state-action sequence starting with state $F_0$:
    \begin{align*}
    \pi_0^* &= f_0^*(F_0),\\
    F_1 &= T^{\pi_0^*}(F_0),\\
    \vdots &\\
    \pi_n^* &= f_n^*(F_n),\\
    F_{n+1} &= T^{\pi_{n}^*}\circ \ldots \circ T^{\pi_0^*}(F_0).
\end{align*}
Then, $(\pi_0^*,\pi_1^*,\ldots,\pi_{N-1}^*)$  determines  an optimal policy $(\sigma_0^*,\ldots, \sigma_{N-1}^*)$ for problem \eqref{prob:II} as follows $$\sigma_n^*(a|h_n)= \pi_n^*\Big(a\Big|x_n, \sum_{k=0}^{n-1}r(x_k,a_k)\Big)$$ for $n=0,1,\ldots,N-1$, where $h_n=(x_0,a_0,\ldots,x_n).$
\end{itemize}

\end{theorem}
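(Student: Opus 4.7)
The plan is to proceed by backward induction on the Bellman recursion \eqref{eq:OE}, show that upper semicontinuity is preserved at each stage so that maximizers exist, and then combine with Proposition \ref{prop:1} (and its converse noted in \eqref{prob:II}) to translate the optimal lifted policy back to the original model.

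For part (a), I would establish by backward induction over $n = N, N-1, \ldots, 0$ the identity
$$J_n(F) = \sup_{(\pi_n,\ldots,\pi_{N-1})} H\bigl(T^{\pi_{N-1}} \circ \cdots \circ T^{\pi_n}(F)\bigr).$$
The base case $n = N$ is the definition $J_N = H$. The inductive step comes from splitting the outer supremum as $\sup_{\pi_n}\sup_{(\pi_{n+1},\ldots,\pi_{N-1})}$ and applying the induction hypothesis to the inner supremum, which reproduces the recursion \eqref{eq:OE}. Specializing to $n=0$ and $F = F_0$, and combining with Proposition \ref{prop:1} together with the converse direction discussed right before \eqref{prob:II}, yields $J_0(F_0) = \sup_{\sigma \in \Pi_N} H(F_N^\sigma)$.

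For part (b), I would strengthen the induction to simultaneously show that every $J_n$ is upper semicontinuous on $P(E \times S)$. The base case is assumption (C1). For the inductive step, observe that $T^\pi(F)(x',s')$ is a finite sum of products of the coordinates of $F$ and $\pi$ (with the fixed weights $q(x'|x,a)$), hence jointly continuous in $(F,\pi)$; composing with the upper semicontinuous $J_{n+1}$ gives a jointly upper semicontinuous map $(F,\pi) \mapsto J_{n+1}(T^\pi(F))$. Since $A$ is finite, $P(A)$ is a compact simplex, and $\Pi^M$ is homeomorphic to the finite product $P(A)^{E \times S}$, hence compact. The standard Berge-type maximum argument then yields both existence of a maximizer $f_n^*(F)$ and upper semicontinuity of $F \mapsto J_n(F)$, closing the induction and proving (b).

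For part (c), the deterministic trajectory $F_0, F_1, \ldots, F_N$ driven by $\pi_n^* = f_n^*(F_n)$ satisfies $J_n(F_n) = J_{n+1}(F_{n+1})$ at every step by the definition of the maximizers, so telescoping gives $J_0(F_0) = J_N(F_N) = H(F_N)$. The converse direction of Proposition \ref{prop:1} recorded in \eqref{prob:II} then shows that the lifted action sequence $(\pi_0^*,\ldots,\pi_{N-1}^*)$ is realized in the original model by the history-dependent rule $\sigma_n^*(a|h_n) := \pi_n^*\bigl(a \bigm| x_n, \sum_{k=0}^{n-1} r(x_k,a_k)\bigr)$, giving $F_N^{\sigma^*} = F_N$. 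Together with (a), this yields $H(F_N^{\sigma^*}) = J_0(F_0) = \sup_{\sigma\in\Pi_N} H(F_N^\sigma)$, so $\sigma^*$ is optimal. The main technical point I anticipate is the preservation of upper semicontinuity through the Bellman operator and the compactness of $\Pi^M$ needed to attain the suprema; once this is established, the remainder is standard dynamic-programming bookkeeping combined with the translation lemma Proposition \ref{prop:1}.
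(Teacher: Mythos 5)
Your proposal is correct and follows essentially the same route as the paper: backward induction establishing $J_n(F) = \sup_{(\pi_n,\ldots,\pi_{N-1})} H(T^{\pi_{N-1}}\circ\cdots\circ T^{\pi_n}(F))$, joint continuity of $(F,\pi)\mapsto T^\pi(F)$ combined with compactness of $\Pi^M = P(A)^{|E|\cdot|S|}$ and a Berge-type maximum theorem (the paper cites Proposition 2.4.3 in \cite{br11}) to propagate upper semicontinuity and obtain maximizers, and the translation back to the original model via Proposition \ref{prop:1} and its converse. Your explicit telescoping argument in part (c) is a minor elaboration of what the paper leaves implicit, not a different method.
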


\begin{proof} The proof proceeds by backward induction. For part a) we prove that $(J_n)$  computed according to \eqref{eq:OE} satisfies
$$ J_n(F)= \sup_{(\pi_n,\ldots,\pi_{N-1})} H\big(T^{\pi_{N-1}}\circ T^{\pi_{N-2}} \circ \ldots \circ T^{\pi_n}(F)\big)$$
for $n=0,\ldots,N-1.$ For $n=N-1$, the statement follows by definition. Suppose the statement is true for $n+1.$ We prove that it is also true for $n$. By \eqref{eq:OE} and the induction hypothesis we obtain
\begin{align*}
    J_n(F) & =  \sup_{\pi_n\in \Pi^M} J_{n+1}(T^{\pi_n}(F)) \\
    & =  \sup_{\pi_n\in \Pi^M}  \sup_{(\pi_{n+1},\ldots,\pi_{N-1})} H\big(T^{\pi_{N-1}}\circ T^{\pi_{N-2}} \circ \ldots \circ T^{\pi_{n+1}}(T^{\pi_n}(F))\big) \\
    &= \sup_{(\pi_n,\ldots,\pi_{N-1})} H\big(T^{\pi_{N-1}}\circ T^{\pi_{N-2}} \circ \ldots \circ T^{\pi_n}(F)\big)
\end{align*}
which proves the statement. It remains to show that the maximum points exist. We do this again by induction, proving that all $J_n$ are upper semicontinuous and the maximum points exist. The argument for the induction starting at time point $N-1$ is essentially the same as for the induction step and we thus skip it. 
Set $m=|E|\cdot|S|.$
Consider the optimization problem in \eqref{eq:OE} and assume that $J_{n+1}: \R^{m}\to \R$ is upper semicontinuous. Let $\pi\in \Pi^M$.    
Clearly, if 
$F_{(k)}\to F$ in $P(E\times S)$ and 
$\pi_{(k)}\to \pi$ in $\Pi^M$ (i.e.,
$\pi_{(k)}(a|x,s)\to \pi(a|x,s)$ for every $a\in A$ and  $(x,s)\in E\times S$), then
    $$   \sum_{(x,s,a) \, : \, r(x,a)=s'-s}  q(x'|x,a)\pi_{(k)}(a|x,s) 
    F_{(k)}(x,s)\to \sum_{(x,s,a) \, : \, r(x,a)=s'-s}  q(x'|x,a)\pi(a|x,s) 
    F(x,s)$$
for every $(x',s')\in E\times S$ as $k\to\infty.$  
    Further, the set $\Pi^M$ is compact, since $\Pi^M=P(A)^m.$ 
    By Proposition 2.4.3 in \cite{br11} 
    there exists  $\pi^*\in\Pi^M$ such that
    \begin{align*}
        &  \sup_{\pi\in \Pi^M} J_{n+1} \Big(\sum_{(x,s,a) \, : \, r(x,a)=\cdot-s}  q(\cdot|x,a)\pi(a|x,s) F(x,s)\Big)\\
        & = J_{n+1} \Big(\sum_{(x,s,a) \, : \, r(x,a)=\cdot-s}  q(\cdot|x,a)\pi^*(a|x,s) F(x,s)\Big)
    \end{align*}
    and the mapping 
     $$  F \to  \sup_{\pi\in \Pi^M} J_{n+1} \Big(\sum_{(x,s,a) \, : \, r(x,a)=\cdot-s}  q(\cdot|x,a)\pi(a|x,s) F(x,s)\Big)$$
     is upper semicontinuous, which proves the statement. Note that the connection between $\pi^*$ and $\sigma^*$ follows from the proof of Proposition \ref{prop:1}.
\end{proof}

We call the last equation in \eqref{eq:OE} 
a 'distributional Bellman equation'. The term 'distributional Bellman equation' has 
been used before (see e.g.\ \cite{bdr2023}). However, there the term refers to a recursive computation of the cumulated reward distribution under a fixed policy, i.e.,\ in a first step there is no optimization involved.


\begin{remark}\label{rem:onlyX}
    Sometimes it may be sufficient to consider the distribution of $X_N$ only, without $R_{N-1}.$ In this case,  the transition function simplifies as follows:
\begin{align*}
    T^\pi(F)(x',S) &:= \sum_{(x,s,a) }  q(x'|x,a)\pi(a|x,s) F(x,s)\\
    &= \sum_{(x,a) }  q(x'|x,a) \sum_s \pi(a|x,s) F(x,s)\\
    &=  \sum_{(x,a) }  q(x'|x,a)  \tilde\pi(a|x) F(x,S),
\end{align*} 
 where $$ \tilde \pi(a|x) := \frac{\sum_s \pi(a|x,s) F(x,s)}{\sum_s  F(x,s)}. $$ Note that $\tilde \pi(a|x)$ is a probability distribution on the action space, since $\tilde \pi(a|x) \ge 0$ and obviously $\sum_a \tilde \pi(a|x)=1.$  
 Moreover, $\tilde \pi(a|x)$ depends only on $x$.  Hence, as the states in the lifted MDP it is  sufficient to consider distributions $F\in P(E)$.
\end{remark}


 \section{A special continuous case} \label{sec:special}
In this section we briefly discuss the case of 
a compact (not necessarily finite) Borel 
action space where we restrict ourselves to 
objective functions which depend only on the 
law of $X_N$ and add the expected reward up to 
time $N-1$. 
As noted in Remark \ref{rem:onlyX}, in this case it is sufficient to define the state of the process at time $n$ in the lifted MDP as the distribution of $X_n$.  More precisely, what we change w.r.t.\ the model in the previous section is that
\begin{description}
\item[(I)] The action set $A$ is a compact metric space.
\end{description}
Any given distribution $\nu$ for the initial state and a policy $\sigma\in\Pi_N$  
define a unique probability measure 
$\PP_\nu^\sigma$  over the space of trajectories of
the states and actions. The construction
is standard, see \cite{puterman2014markov} and the beginning of the previous section. 
We define
$$ F_n^\sigma(x) := \PP_\nu^\sigma(X_{n}=x), \quad x\in E.$$  Formally, the aim is to solve
\begin{equation}\label{eq:prob+}
    \sup_{\sigma\in \Pi_N} \Big\{ H(F_N^\sigma) +  \EE_\nu^\sigma [R_{N-1}] \Big\} 
\end{equation} 
for a function $H:P(E) \to \R.$
Thus, we obtain that the lifted MDP is defined by
\begin{itemize}
\item[(i)] $P(E)$  is the  state space, where the interpretation of a  state $F_n\in P(E)$ at time point $n$ is given as the  distribution of $X_{n}$. Note here that  $F_n$ is ultimately a discrete distribution on the finite set $E$.  In particular we have $F_0=\nu.$
\item[(ii)] $ \Pi^M = \{\pi:E \to P(A)\}$
 is the action space.
 \item[(iii)] The  one-stage reward is given by $\hat r : P(E)\times \Pi^M \to \R$ with $$\hat r (F,\pi) := \sum_x \int_A r(x,a) \pi(da|x) F(x).$$
 \item[(iv)] $H:P(E) \to \R$ is the  terminal reward.
 \item[(v)] The transition function $T:P(E) \times \Pi^M \to P(E)$  is given by 
\begin{align}\label{eq:Toperator2}
    T^\pi(F)(x') :=  T(F,\pi)(x')=\sum_{x} \int_A  q(x'|x,a)\pi(da|x) F(x)
\end{align} 
where $x' \in E$.
\end{itemize}

Note that a Markov policy in the original model 
can be linked to a sequence of actions in the 
lifted MDP (analogously to Prop. \ref{prop:1}). As mentioned in Remark \ref{rem23}
it is well-known that  
for every policy $\sigma$ there exists a Markov policy $\pi$ that induces the same marginal probability measure (see Lemma 2 in \cite{piun1997} for general models with no necessarily  finite action set). Therefore, we conclude that
for
$\sigma=(\sigma_n)_{n=0}^{N-1}$ in the original MDP model, there exists an action sequence 
$\pi=(\pi_n)_{n=0}^{N-1}$ in the lifted MDP such that
  $$F_N^\sigma(x)=\PP^\sigma_\nu(X_N=x) = 
T^{\pi_{N-1}}\circ T^{\pi_{N-2}} \circ \ldots \circ 
T^{\pi_0}(F_0)(x),\quad x\in E. $$ 
Thus, we have that
\begin{eqnarray}\label{pirat}
 && H(F_N^\sigma) +  \EE_\nu^\sigma [R_{N-1}]  \\
 &=&  H \Big( T^{\pi_{N-1}}\circ T^{\pi_{N-2}} \circ \ldots \circ T^{\pi_0}(F_0)\Big)+ \sum_{k=0}^{N-1} \hat r\Big(  T^{\pi_{k-1}}\circ  \ldots \circ T^{\pi_0}(F_0),\pi_k\Big) \nonumber
\end{eqnarray}
where $T^{\pi_{-1}}(F_0):= F_0.$ 
Thus, the value iteration has the following form:
\begin{align}
  \nonumber  J_N(F) & := H(F),\\
    J_n(F) &:= \sup_{\pi\in \Pi^M} \left\{ \hat r (F,\pi)+ J_{n+1}(T^\pi(F)) \right\},\quad n=0,1,\ldots ,N-1. \label{eq:DPEcompact}
\end{align}

In this case, we need another condition to ensure the existence of optimal policies:
\begin{description}
\item[(C1')] The mapping $F\to H(F)$ is upper 
semicontinuous, that is,
$\limsup_{k\to\infty} H(F_{(k)}) \le H(F)$
for any sequence $(F_{(k)})$ converging to $F$ in
$P(E)$ as $k\to\infty.$
    \item[(C2)] The mappings $a\to q(x'|x,a)$ and $a\to r(x,a)$ are continuous for all $x,x'\in E$. 
\end{description}

Note that 
$M:=\max_{(x,a)\in E\times A}|r(x,a)|<\infty.$
Then we obtain:

\begin{theorem}\label{theo:finite_spec}
Assume (C1') and (C2). 
\begin{itemize}
    \item[a)] If $(J_n)$ is computed according to \eqref{eq:DPEcompact} with $T^\pi$ as defined in \eqref{eq:Toperator2}, then $J_0(F_0)$ is the value function of problem \eqref{eq:prob+}, i.e.,
    $J_0(F_0)= \sup_{\sigma\in \Pi_N} \big\{ H(F_N^\sigma) +  \EE_\nu^\sigma [R_{N-1}] \}$.
    \item[b)] Maximizers $(f_0^*,\ldots,f_{N-1}^*)$ in the recursion of \eqref{eq:DPEcompact} exist, i.e., for $n=0,1,\ldots,N-1$
    $$ f_n^*(F) = \argmax_\pi  \left\{ \hat r (F,\pi)+ J_{n+1}(T^\pi(F)) \right\},\; F \in P(E).$$
    \item[c)] Define the following state-action sequence starting with state $F_0=\nu$:
    \begin{align*}
    \pi_0^* &= f_0^*(F_0),\\
    F_1 &= T^{\pi_0^*}(F_0),\\
    \vdots &\\
    \pi_n^* &= f_n^*(F_n),\\
    F_{n+1} &= T^{\pi_{n}^*}\circ \ldots \circ T^{\pi_0^*}(F_0).
\end{align*}
Then  $(\pi_0^*,\pi_1^*,\ldots,\pi_{N-1}^*)$  
determines  an optimal policy $(\sigma_0^*,\ldots, 
\sigma_{N-1}^*)$ for problem \eqref{eq:prob+} as 
follows $$\sigma_n^*(a|h_n)= \pi_n^*\big(a\big|
x_n\big)$$ for $n=0,1,\ldots,N-1$, where $h_n=(x_0,a_0,\ldots,x_n).$
\end{itemize}
\end{theorem}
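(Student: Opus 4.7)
The plan is to mimic the proof of Theorem \ref{theo:finite_main}, with the additional work needed to handle the now infinite (but still compact) lifted action space $\Pi^M=\{\pi:E\to P(A)\}$. By backward induction on $n$, I would establish
$$ J_n(F) = \sup_{(\pi_n,\ldots,\pi_{N-1})}\Big\{ H\big(T^{\pi_{N-1}}\circ\cdots\circ T^{\pi_n}(F)\big) + \sum_{k=n}^{N-1} \hat r\big(T^{\pi_{k-1}}\circ\cdots\circ T^{\pi_n}(F),\pi_k\big)\Big\}, $$
with the convention $T^{\pi_{n-1}}(F):=F$. For $n=0$, combining this with identity \eqref{pirat}, which rests on the standard fact recalled in Remark \ref{rem23} that every history-dependent $\sigma$ admits a Markov policy matching its one-dimensional marginals, yields part a). Parts b) and c) then follow as in Theorem \ref{theo:finite_main}, with $\sigma_n^*(a|h_n):=\pi_n^*(a|x_n)$ translating the lifted optimizer back to the original model.

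Before running the induction I would fix the topology on $\Pi^M$ as the product of the weak topologies on the $|E|$ copies of $P(A)$; since $A$ is compact metric, each $P(A)$ is compact metric by Prokhorov's theorem, and therefore $\Pi^M$ is compact. The key technical step is joint continuity: because $E$ is finite, $F_{(k)}\to F$ in $P(E)$ is pointwise convergence, and if $\pi_{(k)}\to \pi$ in $\Pi^M$ then by (C2) the continuous bounded functions $a\mapsto q(x'|x,a)$ and $a\mapsto r(x,a)$ give
$$ \int_A q(x'|x,a)\,\pi_{(k)}(da|x)\to \int_A q(x'|x,a)\,\pi(da|x),\qquad \int_A r(x,a)\,\pi_{(k)}(da|x)\to \int_A r(x,a)\,\pi(da|x) $$
for every $x,x'\in E$. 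Finite summation over $x$ then gives that $(F,\pi)\mapsto T^\pi(F)$ and $(F,\pi)\mapsto \hat r(F,\pi)$ are both jointly continuous on $P(E)\times\Pi^M$.

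The inductive step then reads as follows. The base case $J_N=H$ is u.s.c.\ by (C1'). Assuming $J_{n+1}$ is u.s.c.\ on $P(E)$, the map $(F,\pi)\mapsto J_{n+1}(T^\pi(F))$ is u.s.c.\ as the composition of a u.s.c.\ function with a continuous one, and adding the continuous $\hat r(F,\pi)$ preserves joint upper semicontinuity in $(F,\pi)$. Since $\Pi^M$ is compact, Proposition 2.4.3 of \cite{br11} then guarantees both that the supremum in \eqref{eq:DPEcompact} is attained (giving the maximizer $f_n^*(F)$ of part b)) and that $F\mapsto J_n(F)$ is u.s.c., which closes the induction. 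Part c) follows as in Theorem \ref{theo:finite_main}: the trajectory $(F_n)$ is generated deterministically, so only the pointwise values of $f_n^*$ enter and no measurable selection is needed to construct $(\pi_n^*)$ and hence the history-dependent $(\sigma_n^*)$.

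The main obstacle will be the joint continuity argument: while essentially routine, it genuinely uses the weak topology on $P(A)$ together with (C2), and it is the one place where the proof meaningfully departs from the finite-action argument of Theorem \ref{theo:finite_main}. The remaining steps — the translation between original and lifted policies via Remark \ref{rem23}, the backward induction on the value iteration, and the Berge-type extraction of maximizers — follow the same template as in the finite case.
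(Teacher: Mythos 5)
Your proposal is correct and follows essentially the same route as the paper: backward induction unrolling the recursion \eqref{eq:DPEcompact} into the form \eqref{pirat} for part a), then joint continuity of $\hat r$ and $T^\pi$ on $P(E)\times\Pi^M$ (via (C2) and the weak topology on $P(A)$), compactness of $\Pi^M$, and Proposition 2.4.3 of \cite{br11} for parts b) and c). The only cosmetic difference is that the paper verifies the joint continuity by an explicit two-term estimate using the bound $M$, whereas you invoke weak convergence of the integrals directly; both arguments are equivalent here.
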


\begin{proof}
Part a) can be deduced by backward induction. 
Indeed note that
\begin{eqnarray*}
J_{N-1}(F)&=&\sup_{\pi_{N-1}}\Big\{\hat r(F,\pi_{N-1})+H(T^{\pi_{N-1}}(F))\Big\}\quad\mbox{and}\\
J_{N-2}(F)&=&\sup_{\pi_{N-2}}\Big\{\hat r(F,\pi_{N-2})+J_{N-1}(T^{\pi_{N-2}}(F))\Big\}\\
&=&\sup_{(\pi_{N-2},\pi_{N-1})}\Big\{\hat r(F,\pi_{N-2})+
\hat r(T^{\pi_{N-2}}(F),\pi_{N-1})+ H(T^{\pi_{N-1}}\circ T^{\pi_{N-2}}(F))\Big\}.
\end{eqnarray*}
Consequently,
\begin{eqnarray*}
J_{N-n}(F)
&=&\sup_{(\pi_{N-n},\ldots,\pi_{N-1})}\Big\{\hat r(F,\pi_{N-n})+ \sum_{k=1}^{n-1}
\hat r(T^{\pi_{N-(k+1)}}\circ\ldots\circ T^{\pi_{N-n}}(F),\pi_{N-k})\\&&+ H(T^{\pi_{N-1}}\circ \ldots\circ T^{\pi_{N-n}}(F))\Big\} \quad\mbox{and}\\
J_0(F)&=&\sup_{(\pi_{0},\ldots,\pi_{N-1})}\Big\{\hat r(F,\pi_{0})+ \sum_{k=1}^{N-1}
\hat r(T^{\pi_{N-(k+1)}}\circ\ldots\circ T^{\pi_{0}}(F),\pi_{N-k})\\&&+ H(T^{\pi_{N-1}}\circ \ldots\circ T^{\pi_{0}}(F))\Big\}.
\end{eqnarray*}
Now observe that the expression in the curly brackets equals the right-hand side in (\ref{pirat}). This proves  part a).
For parts b) and c) we proceed as follows. 
 Let $(F_{(k)},\pi_{(k)}) \to (F,\pi)$ in $P(E)\times \Pi^M$ 
as $k\to \infty.$ Recall that it means that
$F_{(k)}(x)\to F(x)$  and 
$\pi_{(k)}(\cdot|x)\to\pi(\cdot|x)$ weakly in $P(A)$ for every $x\in X.$ We show that 
$$\hat r(F_{(k)},\pi_{(k)}) \to \hat
r(F,\pi)\quad\mbox{and}\quad T(F_{(k)},\pi_{(k)}) \to 
T(F,\pi)$$
as $k\to\infty.$
Indeed, notice that
 \begin{align*}
\lefteqn{\left| \hat r(F_{(k)},\pi_{(k)}) -
\hat r(F,\pi)\right| =}\\
 &  \left| \sum_{x} \int_A  r(x,a)
 \pi_{(k)}(da|x) F_{(k)}(x) - \sum_{x}\int_A r(x,a)
 \pi(da|x) F(x) \right| \\&
 \le  \sum_{x}  \left| \int_A r(x,a)
 \pi_{(k)}(da|x)  - \int_A  r(x,a)\pi(da|x) 
 \right| F_{(k)}(x) \\
 & +  \sum_{x} \int_A  r(x,a)\pi(da|x) 
 \left|F_{(k)}(x) - F(x)  \right|\\&
 \le  \sum_{x}  \left| \int_A r(x,a)
 \pi_{(k)}(da|x)  - \int_A  r(x,a)\pi(da|x) 
 \right|  \\
 & +  M \sum_{x} 
 \left|F_{(k)}(x) - F(x)  \right|. 
      \end{align*}
Hence, the first term tends to zero by (C2) and
the second term converges to zero by definition. 
In the same manner, we show that  
$T(F_{(k)},\pi_{(k)}) \to T(F,\pi).$
Clearly, $\pi^M$ is compact, since $\Pi^M=P(A)\times\cdots\times P(A)$ ($|E|$ times). Hence, the remaining parts follow as before from Proposition 2.4.3 in \cite{br11}. 
\end{proof}

\section{Infinite Horizon Problems}\label{sec:infi}
So far we have considered problems with a finite time horizon. Let us now briefly turn to the situation with an infinite time horizon in the setting of Section 
\ref{sec:model}. 
Hence, we deal with the same problem as in Section \ref{sec:model} with the difference that we introduce a discount factor $\beta\in(0,1)$
and set $N=\infty.$ A policy $\sigma$ is a sequence 
$(\sigma_0,\sigma_1,\ldots)$ of history dependent decision rules 
$\sigma_n : H_n \to P(A)$. By $\Pi_\infty$ we denote the set of all policies.  The random reward we are interested in is now given by
$$ R_\infty := \lim_{n\to\infty} R_n,\quad \mbox{ where}\quad R_n :=\sum_{k=0}^n \beta^k r(X_k,A_k).$$
Clearly, $R_\infty$ is well-defined and $R_\infty\in[-M/(1-\beta),M/(1-\beta)],$  where  that $M=\max_{(x,a)\in E\times A}|r(x,a)|$.
Let us consider the problem of 
maximizing the expected discounted reward when the 
initial distribution is $\nu$, that is, 
$$ \sup_{\sigma\in \Pi_\infty} \EE_{\nu}^\sigma[R_\infty].$$
As in the previous section $\EE_{\nu}^\sigma$ stands
for the expectation operator that refers to the probability measure $\PP_{\nu}^\sigma$ defined uniquely on $H_\infty:=(E\times A)^{\N}$ with 
$\sigma$-algebra generated by the cylinder sets,
see \cite{puterman2014markov}.
It is well-known that the problem 
can be solved with the help of a fixed point equation and an optimal policy can be found among stationary deterministic Markovian policies, i.e., an optimal policy 
$\sigma^*$ is of the form  $\sigma^*=(\tilde \sigma, \tilde\sigma,\ldots)$, 
where $\tilde \sigma: E\to A.$ Now for $\sigma\in \Pi_\infty$ and 
$B\in \mathcal{B}(\R)$ define 
$$F^\sigma_\infty(B)= \PP_\nu^\sigma (R_\infty \in B)$$
as the distribution of $R_\infty$ under policy $\sigma,$  when the initial distribution is given by $\nu$.
Assume that $H:P(\R)\to\R$ is an arbitrary functional.
  The aim is to solve
\begin{equation}\label{eq:prob_infty}
     \sup_{\sigma\in \Pi_\infty} V(\sigma), \mbox{ where } V(\sigma) :=H(F_\infty^\sigma).
\end{equation} 
In this situation, it is in general not 
true that an optimal policy can be found in 
the set of stationary deterministic 
Markovian policies as the following example 
shows. 

\begin{example}\label{exam2}
Consider the  degenerate Markov 
decision process given by: $E=\{0\}$, 
$A=\{0,1\}$ and $r(x,a)=\frac12 a,$ i.e., the decision alone determines the reward. 
Thus, the transition probabilities are given by $q(0|0,a)=1.$ Further assume that 
$\beta=1/2.$ This implies that $R_\infty$ is deterministic given by
$$ R_\infty = \frac12 \sum_{k=0}^\infty \left(\frac12\right)^k a_k,$$
where $(a_k)_{k=0}^\infty$ is the sequence of chosen 
actions. Hence, $R_\infty \in [0,1]$ and we 
can obtain any number in $[0,1]$ by 
choosing $(a_k)_{k=0}^\infty$ accordingly. Now fix  $B:=\{\sqrt{2}/2\}$ and let
$\sigma\in\Pi_\infty.$ 
For $F^\sigma_\infty\in P([0,1])$ define  the function $H(F^\sigma_\infty) := 
F^\sigma_\infty(B) =\PP^\sigma_{\delta_0}(R_\infty = \sqrt{2}/2).$ 
Clearly, $ \max_{\sigma\in\Pi_\infty} H(F^\sigma_\infty)=1$ for $\sigma^*$  
represented by the actions $(a_0,a_1\ldots )$ being the dyadic representation 
of $\sqrt{2}/2$. 
But $(a_0,a_1\ldots )$ is an infinite,  
non-periodic sequence. Thus, there is no hope 
for any optimal stationary  policy. Contrary, 
one can study the distribution of $R_\infty$ 
under the restriction of stationary 
deterministic Markovian policies. In such 
setting it makes sense to discuss the 
existence and properties of solutions to the 
distributional fixed point equation $R 
\stackrel{d}{=} X + \beta R,$ see e.g., 
\cite{neininger}. 
\end{example}

Here, we consider next the question of 
existence of optimal policies for 
\eqref{eq:prob_infty} and their approximation. 
We denote by $H=\cup_{n\in \N} H_n$ the set of all 
histories. Since $E$ and $A$ are finite, $H$ is 
countable. Therefore, $\Pi_\infty =P(A)^H=\{\sigma: 
H\to P(A)\}.$ By Tychonoff's theorem this set is 
compact in the product topology.  Note that
the mapping $\sigma\to \PP^\sigma_\nu$ is continuous when the set of strategic measures is equipped with the weak topology.



We proceed with the following assumption
\begin{description}
     \item[(C3)] The mapping $H$ is Lipschitz-continuous w.r.t.\ the Wasserstein $W_1$-distance, i.e., there exists $K>0$ such that for all $F,G\in P(\R)$
     $$ |H(F)-H(G)|\le K \cdot W_1(F,G).$$
\end{description}
Note that (C3) implies that $H$ is 
continuous w.r.t.\ weak convergence. This 
follows since weak convergence plus the 
convergence of the first moments is 
equivalent to convergence in the 
$W_1$-topology, see Theorem 6.9 in \cite{villani2008optimal}.
Define for $\sigma\in \Pi_\infty$ by 
$ F_N^\sigma$ the distribution of $R_N$ 
under $\PP^\sigma_\nu$:
$$F_N^\sigma=\PP^\sigma_\nu(R_N\in \cdot)$$
In this case, only the first $N$ decision rules of the sequence are important. 
Let 
\begin{equation}\label{eq:prob++}
    V_N(\sigma)  :=  H(F_N^\sigma), \quad   V_N := \sup_{\sigma\in \Pi_\infty} H(F_N^\sigma).
\end{equation} 
Further, let
\begin{eqnarray*}
    A_N &:=& \{ \sigma\in \Pi_\infty : V_N(\sigma) = \sup_{\sigma'\in \Pi_\infty} H(F_N^{\sigma'})\},\\
    A_\infty &:=&  \{ \sigma\in \Pi_\infty : V(\sigma) = \sup_{\sigma'\in \Pi_\infty} H(F_\infty^{\sigma'})\}.
\end{eqnarray*}
Note that under (C3) and our discussion the sets $A_N, A_\infty$ are non-empty. 
We obtain the following result where we  use the definition
$$ Ls A_N := \{\sigma\in \Pi_\infty : 
\sigma \mbox{ is an accumulation point of a 
sequence } (\sigma^N) \mbox{ with } 
\sigma^N \in A_N\}.$$

\begin{theorem}\label{theo:infty_main}
Under (C3) we obtain:
\begin{itemize}
\item[a)] The infinite horizon problem can 
be approximated by the finite horizon 
problems
$$ \lim_{N\to\infty} \sup_{\sigma\in\Pi_\infty} H(F_N^\sigma) = \sup_{\sigma\in\Pi_\infty} \lim_{N\to\infty}  H(F_N^\sigma) = \sup_{\sigma\in\Pi_\infty} H(F_\infty^\sigma).$$
\item[b)] There exists an optimal policy 
$\sigma^*=(\sigma_0^*,\sigma_1^*,\ldots)\in \Pi_\infty$ for problem 
\eqref{eq:prob_infty} and $\emptyset \neq 
Ls A_N \subset A_\infty.$
\end{itemize}
\end{theorem}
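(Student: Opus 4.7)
The plan is to deduce (a) from a uniform tail estimate on the discounted rewards and then to derive (b) from compactness of $\Pi_\infty$ combined with continuity of $V$.

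For part (a) I would first exploit the uniform reward bound. Because $|r|\le M$ and $\beta\in(0,1)$, for every $\sigma\in\Pi_\infty$ one has the pointwise bound $|R_\infty-R_N|\le \beta^{N+1}M/(1-\beta)$, $\PP_\nu^\sigma$-a.s. The pair $(R_N,R_\infty)$ under $\PP_\nu^\sigma$ is a coupling of $F_N^\sigma$ and $F_\infty^\sigma$, so by the definition of $W_1$ as an infimum over couplings,
$$W_1(F_N^\sigma,F_\infty^\sigma)\le \EE_\nu^\sigma|R_\infty-R_N|\le \frac{\beta^{N+1}M}{1-\beta}.$$
Applying (C3) yields the uniform bound $\sup_\sigma|H(F_N^\sigma)-H(F_\infty^\sigma)|\le K\beta^{N+1}M/(1-\beta)\to 0$. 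In particular $\lim_N H(F_N^\sigma)=H(F_\infty^\sigma)$ for every fixed $\sigma$, which gives the second equality in (a); the first equality then follows from the elementary inequality $|\sup V_N-\sup V|\le \sup|V_N-V|$.

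Next I would prove that $\sigma\mapsto V(\sigma)=H(F_\infty^\sigma)$ is continuous on $\Pi_\infty$ equipped with the product topology. For any fixed $N$ the function $R_N=\sum_{k=0}^N\beta^k r(X_k,A_k)$ depends on only finitely many coordinates of the trajectory and is thus bounded and continuous on $H_\infty$. Combined with the continuity of $\sigma\mapsto \PP_\nu^\sigma$ (weak topology) noted in the excerpt, the push-forward $\sigma\mapsto F_N^\sigma$ is weakly continuous. All $F_N^\sigma$ are supported on the compact interval $[-M/(1-\beta),M/(1-\beta)]$, where weak convergence coincides with $W_1$-convergence, so by (C3) each $\sigma\mapsto H(F_N^\sigma)$ is continuous. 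Being the uniform limit of continuous functions, $V$ is continuous as well.

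For part (b), $\Pi_\infty=P(A)^H$ is a countable product of compact metric spaces, hence compact and metrizable, and continuity of $V$ immediately gives $A_\infty\neq\emptyset$. Each $A_N$ is also non-empty by the same argument applied to the continuous function $V_N$. To establish $\emptyset\neq Ls A_N\subset A_\infty$, pick any sequence $\sigma^N\in A_N$; by metrizable compactness, a subsequence $\sigma^{N_j}$ converges to some $\sigma^*$, so $Ls A_N\neq\emptyset$. The decomposition
$$V(\sigma^*)-V_{N_j}=\bigl(V(\sigma^*)-V(\sigma^{N_j})\bigr)+\bigl(V(\sigma^{N_j})-V_{N_j}(\sigma^{N_j})\bigr)$$
gives $V(\sigma^*)=\lim_j V_{N_j}$: the first bracket tends to zero by continuity of $V$ and the second tends to zero uniformly by (a). Together with $V_{N_j}\to \sup_\sigma V(\sigma)$ from (a), this yields $V(\sigma^*)=\sup_\sigma V(\sigma)$, i.e.\ $\sigma^*\in A_\infty$.

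The conceptual heart is the continuity argument: upgrading pointwise (product-topology) convergence of history-dependent decision rules to $W_1$-continuity of the induced infinite-horizon reward distribution. The three ingredients---boundedness of $R_\infty$, weak continuity of the strategic measure, and the equivalence of weak and $W_1$ convergence on uniformly bounded measures---fit together precisely so that the Lipschitz assumption (C3) can close the argument.
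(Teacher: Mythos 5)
Your proof is correct and follows essentially the same route as the paper: the identical uniform tail bound $W_1(F_N^\sigma,F_\infty^\sigma)\le \beta^{N+1}M/(1-\beta)$ combined with (C3) to get uniform convergence of $V_N$ to $V$, after which the paper simply cites Theorem A.1.5 of B\"auerle--Rieder (together with the compactness of $\Pi_\infty$ and the continuity of $\sigma\mapsto\PP_\nu^\sigma$ noted before the theorem) to conclude parts a) and b). The only difference is that you prove that cited lemma by hand --- continuity of each $V_N$, continuity of the uniform limit $V$, and the standard accumulation-point argument for $Ls\,A_N\subset A_\infty$ --- all of which is carried out correctly.
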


\begin{proof}
Parts a), b): Note that we have for fixed 
$\sigma\in \Pi_\infty$ and $n\ge m:$
$$ H(F_n^\sigma) - H(F_m^\sigma)\le K 
W_1(F_n^\sigma, F_m^\sigma) \le K 
\frac{\beta^{m+1} M }{1-\beta},$$
where  $\frac{\beta^{m+1} M}{1-\beta} \to 0$ for $m\to\infty.$
The latter inequality follows since
$$ W_1(F_n^\sigma, F_m^\sigma) \le \EE^\sigma_\nu |R_n-R_m| =  \EE^\sigma_\nu
\left(\sum_{k=m+1}^n \beta^k |r(X_k,A_k)|\right)\le \frac{\beta^{m+1} M}{1-\beta}.$$
Thus, the results are implied by 
Theorem A.1.5 in \cite{br11}. 
\end{proof}

 Theorem \ref{theo:infty_main} states that the value of the infinite horizon problem $\sup_{\sigma\in \Pi_\infty} H(F_\infty^\sigma)$ can be approximated by the value of the finite horizon problem $\sup_{\sigma\in \Pi_\infty} H(F_N^\sigma)$ for large $N$. This is not too surprising since $\beta\in (0,1)$ and the tail of the reward vanishes. Since the initial distribution $\nu$ if fixed, by  Theorem 2 in \cite{derman1966note}, the policy $\sigma^*$ can be replaced by a Markov policy in the original MDP model. 

\section{Special Cases and Applications}\label{sec:specialcases}
We now discuss some special choices for $H$ which are meaningful. In particular, by setting $H$ in the right way we rediscover some well-known results in the literature. In order to simplify the presentation, we use the finite state-action framework, i.e., we can work with sums instead of integrals.
\subsection{Classical MDP}
In the classical MDP theory, we want to maximize $\EE_{x_0}^\sigma R_N$ which is obtained when we choose $$H(F)=\sum_{x,s} (g(x)+s) F(x,s).$$
We show next that our general algorithm in the discrete case from \eqref{eq:OE} yields the established Bellman optimality equation as a special case. Let us briefly recall the standard theory and denote by $V_n : E\to \R$ the value function in the classical case.  Set $V_N=g$ and due to \eqref{eq:OEclassic0}
\begin{equation}\label{eq:OEclassic}
    V_n(x)= \max_a \big\{r(x,a) + \sum_{x'} 
    V_{n+1}(x')q(x'|x,a) \big\}.
\end{equation} 
The interpretation is 
$$V_n(x) = \sup_\sigma \EE_x^\sigma\Big[\sum_{k=n}^{N-1}r(X_k,A_k) + g(X_N)\Big],$$
i.e., $V_n$ is the maximal expected reward of 
the system from time $n$ onwards when we start 
at time $n$ in state $x.$ We begin with 
investigating our algorithm at time point 
$N-1:$
 \begin{align*}
J_{N-1}(F)  =& \sup_{\pi\in \Pi^M} H(T^\pi(F)) 
\\
 =& \sup_{\pi\in \Pi^M}  \sum_{x',s'} (g(x')+s') T^\pi(F)(x',s')\\
 =& \sup_{\pi\in \Pi^M}  \sum_{x',s'} 
 (g(x')+s')  \sum_{(x,s,a) :  r(x,a)=s'-s}  q(x'|x,a) \pi(a|x,s) F(x,s)\\
 =& \sup_{\pi\in \Pi^M}  \sum_{x'}  \sum_{(x,s,a) }  (g(x')+s+r(x,a)) q(x'|x,a) \pi(a|x,s) F(x,s)\\
   =& \sup_{\pi\in \Pi^M}\Big\{  \sum_{x'}  \sum_{(x,s,a) }  (g(x')+r(x,a))  q(x'|x,a) \pi(a|x,s) F(x,s) \\
   & +  \sum_{x'}   \sum_{(x,s,a) } s q(x'|x,a) \pi(a|x,s) F(x,s) \Big\}\\
    =&\sup_{\pi\in \Pi^M}  \sum_{x'}  \sum_{(x,s,a) }  (g(x')+r(x,a))  q(x'|x,a) \pi(a|x,s) F(x,s) +    \sum_{s } s  F(E,s)\\
    =& \sum_{s } s  F(E,s) +\sup_{\pi\in \Pi^M}  \sum_{x'}  \sum_{(x,s,a) }  (g(x')+r(x,a))  q(x'|x,a) F(x,s) \pi(a|x,s)    \\
     =& \sum_{s } s  F(E,s) +\sum_{x,s}F(x,s)\sup_{a}  \sum_{x'} (g(x')+r(x,a))  q(x'|x,a)     \\
 =&\sum_{s } s  F(E,s) +  \sum_{x}  
F(x,S)   \sup_{a} \sum_{x' }  (g(x')+r(x,a))  q(x'|x,a) \\
       =&\sum_{s } s  F(E,s) +  \sum_{x}  F(x,S)   \sup_{a} \{r(x,a)+\sum_{x' }  g(x')  q(x'|x,a)\}. 
 \end{align*}  
From this observation we obtain the following conjecture for time point $n$:
$$ J_n(F)=\sum_s s F(E,s) + \sum_{x} F(x,S) V_n(x),$$
where $V_n$ is the value function of the classical Bellman equation in \eqref{eq:OEclassic}. We prove this conjecture by
 induction over the time horizon. For $n=N$ we obtain:
\begin{align*}
J_N(F)= H(F) = \sum_{x,s} (g(x)+s) F(x,s) 
= \sum_{s} s F(E,s)+ \sum_{x} g(x) F(x,S).  
\end{align*}
For $n=N-1$ the statement is what we computed before. Now suppose the statement is true for $N,N-1,\ldots,n+1.$ We show that it is also true for time point $n.$
\begin{align*}
J_n(F)&=\sup_{\pi\in\Pi^M} J_{n+1}(T^\pi(F)) \\
&=  \sup_{\pi\in\Pi^M} \Big\{\sum_{s'} s' T^\pi(F)
(E,s') + \sum_{x'}  T^\pi(F)(x',S) V_{n+1}
(x')\Big\}\\
&=  \sup_{\pi\in\Pi^M} \Big\{\sum_{s',x'} s' 
T^\pi(F)(x',s') + \sum_{s',x'}  T^\pi(F)
(x',s') V_{n+1}(x') \Big\}\\
&=  \sup_{\pi\in\Pi^M}\Big\{ \sum_{s',x'} s' 
\sum_{(x,s,a) :  r(x,a)=s'-s}  q(x'|x,a) 
\pi(a|x,s) F(x,s) \\ & \quad  + \sum_{s',x'}  \sum_{(x,s,a) : r(x,a)=s'-s} q(x'|x,a) 
\pi(a|x,s) F(x,s) V_{n+1}(x')\Big\}\\
 &=  \sup_{\pi\in\Pi^M} \Big\{\sum_{x'}  \sum_{(x,s,a) } (r(x,a)+s) q(x'|x,a) \pi(a|x,s) F(x,s) \\  &\quad  + \sum_{x'}  \sum_{(x,s,a) }  q(x'|x,a) \pi(a|x,s) F(x,s) V_{n+1}(x') \Big\}\\
&=  \sup_{\pi\in\Pi^M} \Big\{ \sum_{(x,s,a) } r(x,a)  \pi(a|x,s) F(x,s) +  \sum_{(x,s) } s  
F(x,s)\\ & \quad  + \sum_{x'}  \sum_{(x,s,a) } 
q(x'|x,a) \pi(a|x,s) F(x,s) V_{n+1}(x')\Big\}\\
&=  \sum_{s} s  F(E,s)+ \sup_{\pi\in\Pi^M}  
\sum_{(x,s,a) }   \pi(a|x,s) F(x,s) \{r(x,a)+  
\sum_{x'}   q(x'|x,a)  V_{n+1}(x')\} \\
&=  \sum_{s} s  F(E,s)+   \sum_{(x,s) }   
F(x,s) \sup_a  \{r(x,a)+  \sum_{x'}   
q(x'|x,a)  V_{n+1}(x')\}  \\
&=  \sum_{s} sF(E,s)+\sum_{(x,s)}F(x,s) V_{n}(x) =  \sum_{s} s  F(E,s)+ \sum_{x }F(x,S) V_{n}(x).
\end{align*}
Thus, when we look at the last two lines, it is possible to recover the classical Bellman equation in our algorithm. In particular, the optimal strategy does not depend on $F$ and is deterministic.
The interpretation of $J_n(F_n)$  for $F_n=T^{\pi_{n-1}^*}\circ\ldots\circ T^{\pi_{0}^*}
(F_0)$ defined in Theorem \ref{theo:finite_main} 
with $F_0 = \delta_{x_0}\otimes\delta_0$ is as follows:
\begin{eqnarray*}
 J_n(F_n) &=& \sup_{(\sigma_0,\ldots,\sigma_{n-1})} \EE_{x_0}^{(\sigma_0,\ldots,\sigma_{n-1})}\Big[\sum_{k=0}^{n-1}r(X_k,A_k)\Big]+
\\
&&\sum_x F_n(x,S)  \sup_{(\sigma_n,\ldots,\sigma_{N-1})} \EE_{x}^{(\sigma_n,\ldots,\sigma_{N-1})}\Big[\sum_{k=n}^{N-1}r(X_k,A_k) + g(X_N)\Big].
\end{eqnarray*}
Hence, the algorithm separates at time point $n$ the future from the past.

\subsection{The case of quantile optimization}
 We are interested in the problem
 $$\sup_\sigma \PP^\sigma_\nu(R_{N}\ge t)$$
 for a fixed $t\in\R$. In other words, we want to maximize the probability that the accumulated reward exceeds the threshold $t.$ Hence, we can write the optimization criteria as  $$ \PP^\sigma_\nu(R_{N-1}+g(X_N)\ge t)= \sum_{x,s} F_N^\sigma(x,s) V_N(x,s),$$ where
 $$ V_N(x,s):= \left\{ \begin{array}{cl}
    1  & \mbox{ if } s+g(x)\ge t  \\
    0  & \mbox{ else. }
 \end{array}\right.$$
Problems like this have been investigated in \cite{filar1995percentile,wu1999minimizing,bauerle2011markov,chow2015risk,gilbert2017optimizing,li2022quantile}.
 It is known that the value for $\nu=\delta_{x_0}$ is given by $V_0(x_0,0)$ where $V_0$ can be computed from the recursion 
\begin{equation}\label{eq:quantile_rec}
    V_n(x,s) = \sup_a  \sum_{x'} V_{n+1}(x',s+r(x,a))q(x'|x,a), \quad n=0,1,\ldots,N-1.
\end{equation} 
Let us consider our algorithm again at time point $N-1:$
\begin{align*}
J_{N-1}(F) & = \sup_{\pi\in \Pi^M} H(T^\pi(F))  \\
&= \sup_{\pi\in \Pi^M}   \sum_{x',s'} 
V_N(x',s') T^\pi(F)(x',s')\\
&= \sup_{\pi\in \Pi^M}   \sum_{x',s'} 
V_N(x',s')   \sum_{(x,s,a) :  r(x,a)+s= s'} 
q(x'|x,a)  \pi(a|x,s) F(s,x)\\
&= \sup_{\pi\in \Pi^M}   \sum_{x'} 
\sum_{x,s,a} V_N(x',s+r(x,a))    q(x'|x,a)  
\pi(a|x,s) F(s,x)\\
&=  \sum_{x,s} F(s,x) \sup_{\pi\in \Pi^M}  
\sum_{x'}\sum_a  V_N(x',s+r(x,a))q(x'|x,a)  
\pi(a|x,s) \\
&=  \sum_{x,s} F(s,x) \sup_{a}   \sum_{x'} 
V_N(x',s+r(x,a))    q(x'|x,a) =:  
\sum_{x,s} F(s,x) V_{N-1}(x,s).
 \end{align*}  
This gives rise to the following conjecture:
$$ J_n(F)=\sum_{x,s}  F(x,s) V_{n}(x,s),$$
where $V_n$ is given in \eqref{eq:quantile_rec}.
Here we start with $V_N$ defined above.
For $n=N$, the statement is true by
definition of $V_N.$ Now suppose the 
statement is true for $N,N-1,\ldots,n+1.$ 
We show that it is also true for the time 
point $n.$
\begin{align*}
J_n(F)&= \sup_\pi J_{n+1}(T^\pi(F)) \\
&=  \sup_{\pi\in\Pi^M} \sum_{x',s'}  V_{n+1}(x',s')  
T^\pi(F)(x',s') \\
&=  \sup_{\pi\in\Pi^M} \sum_{s',x'}  V_{n+1}(x',s') 
\sum_{(x,s,a) :  r(x,a)=s'-s}  q(x'|x,a) 
\pi(a|x,s) F(x,s)\\
&=  \sup_{\pi\in \Pi^M}   \sum_{x'} 
\sum_{x,s,a} V_{n+1}(x',s+r(x,a))q(x'|x,a)
\pi(a|x,s) F(s,x)\\
&=  \sum_{x,s} F(s,x) \sup_{\pi\in \Pi^M}   
\sum_{x'}\sum_a  V_{n+1}(x',s+r(x,a))   
q(x'|x,a)  \pi(a|x,s) \\
&=  \sum_{x,s} F(s,x) \sup_{a}   \sum_{x'} 
V_{n+1}(x',s+r(x,a))    q(x'|x,a) = 
\sum_{x,s} F(s,x) V_{n}(x,s),
\end{align*}
which completes the induction step. Thus, 
we can see that the optimal strategy does not depend on $F$ and is deterministic. The 
recursion reduces to the recursion for 
$V_n$ which appears in a similar way in 
\cite{wu1999minimizing,bauerle2011markov}. 
The interpretation for $V_n$ is 
$$ V_n(x,s)= \sup_\sigma 
\PP^\sigma_\nu(R_{N-1}+g(X_N)\ge t| X_n=x, 
R_{n-1}=s).$$

 \subsection{Optimal transport}
We next consider a non-standard  application. We want to determine the transition probabilities of a  random walk in such a way that at a fixed time point a 
given distribution is best approximated in 
a certain metric, and at the same time the cost of transporting the probability mass is minimal.  The theoretical framework is that of Section \ref{sec:special} with some minor extensions. Instead of maximizing a reward,  we now formulate the problem as one of minimizing cost. This can be considered as an optimal transport problem. The optimal transport problem was  first discussed by \cite{monge1781memoire} and then refined by Kantorovich (1948), see \cite{kantorovich2006problem} for a reprint of the original article. A lot of researchers considered the optimal transport problem and many important results were established, see for example \cite{rachev2006mass2,rachev2006mass} or \cite{villani2008optimal} for  monographs on the topic. The optimal transport between discrete-time stochastic processes has connections to dynamic programming, see \cite{terpin2024dynamic,backhoff2017causal,moulos2021bicausal}. But we will pursue a different direction here. To be more specific, we 
consider the following data. Let $N\in\N$ be a fixed time horizon, $G$ be a given 
distribution on $E$. 
The terminal cost function  $H:P(E) \to \R$ is given by 
$$ H(F) := \int_{\R} |F_c(t)-G_c(t)|dt,$$
which is the Wasserstein $W_1$ distance between $G$ 
and the distribution $F$ of the terminal state 
$X_N$ of a controlled random walk. Here, $F_c$ and 
$G_c$ are cumulative distributions of $F$ and 
$G$, respectively.
More precisely, we define the approximating MDP (random walk) by

\begin{itemize}
\item[(i)] $E= \{1, \ldots ,K\},\, K\in\N,$  is the state space,
\item[(ii)] $A= \{(a^1,a^2)\in [0,1]^2 : a^1+a^2 \le 1  \}$
 is the  action space,
 \item[(iii)] the transition probability from $E\times A$ to $E$ for $x\in  \{ 2, \ldots ,K-1\}$ is given by
 $$ q(x'|x,a^1,a^2)= \left\{ \begin{array}{cc}
   a^1,   &  x'=x+1,\\
   a^2,   & x'=x-1,\\
   1-a^1 -a^2, & x'=x,
 \end{array}\right.$$
 and 
 $$ q(x'|0,a^1,a^2)= \left\{ \begin{array}{cc}
   a^1,   &  x'=1,\\
   1-a^1, & x'=0,
 \end{array}\right. \quad  q(x'|K,a^1,a^2)= \left\{ \begin{array}{cc}
   a^2,   &  x'=K-1,\\
   1-a^2 , & x'=K,
 \end{array}\right.$$
 \item[(iv)] $r_n(x,a^1,a^2)= c_n(a_1+a_2)$ where $0<c_0\le c_1 \le \ldots \le c_N\le 1$ are constants.
\end{itemize}
 The transition law is that for a random walk $(X_n)$ which can get from state $x$ only in state $x+1$ or $x-1$ or stay in $x$. The corresponding probabilities $a^1, a^2$ are subject to the decision. The choice of $a^1,a^2$ incurs some cost.
Note that we have a non-stationary problem here and the  transportation cost is non-decreasing in time. Thus, early transport is cheaper than later transport. The objective function is then
\begin{equation}\label{eq:prob_ex}
    \inf_{\sigma\in \Pi_N} \Big\{H(F_N^\sigma) +  \EE_\nu^\sigma [R_{N-1}]\Big\},
\end{equation} 
where $R_{N-1}=\sum_{k=0}^{N-1} c_k(A_k^1+A_k^2)$ with $A_k=(A_k^1,A_k^2).$ This is a weighted objective of distance to the desired distribution and expected transportation costs.
Note that the assumptions (C1) and (C2) of Section \ref{sec:model}  are satisfied and that the action space is compact. 
Thus, $(X_n)$ is a random walk on $E,$
where the up and down probabilities can be 
chosen and may depend on the history of the 
process. This kind of processes are called 
'elephant random walk' (see
\cite{gut2021variations}), 
since elephants have a long memory. In line 
with Section \ref{sec:special}, it is here 
enough to consider the recursion for the 
distribution of the first component $X_n, $ 
hence we write 
$F_n^\sigma(x)=\PP^\sigma_\nu(X_n=x),\, x\in E.$   Recall that $F_0^\sigma=\nu.$
Since the costs and transition functions have special structure, i.e., they are linear in the action variables, it is sufficient to consider
the set of non-randomized actions in the lifted 
MDP, that is, the set of all mappings 
$\pi :E\to A.$ With a little abuse of notation we
denote the set by the same symbol $\Pi^M$ as in the previous sections.
By a slight extension of Theorem \ref{theo:finite_spec}, the value functions in this application are for $n=0,1,\ldots ,N-1$ given by
\begin{align}
  \nonumber  J_N(F) & := H(F),\\
    J_n(F) &:=  \inf_{\pi\in \Pi^M} \left\{ \hat r_n (F,\pi)+ J_{n+1}(T^\pi(F)) \right\}, \label{eq:OE2}
\end{align}
where 
\begin{eqnarray*}
    \hat r_n(F,\pi) &=& \sum_x  r_n(x,\pi(x))  F(x) \\
    &=& c_n \sum_x F(x) (\bar{a}^1(x)+\bar{a}^2(x)),
\end{eqnarray*}
with $\pi(x)=(\bar{a}^1(x),\bar{a}^2(x)).$ In particular, $\pi(1)=(1,0)$ and $\pi(K)=(0,1)$ (i.e., $\bar{a}^2(1)=\bar{a}^1(K)=0$) and
\begin{eqnarray*}
T^\pi (F)(x') &=&
\sum_x q(x'|x,\pi(x)) F(x)=
\sum_x q(x'|x,\bar a^1(x),\bar a^2(x)) F(x)\\
&=& \bar{a}^1(x'-1) F(x'-1) + \bar{a}^2(x'+1) F(x'+1) + (1- \bar{a}^1(x')-\bar{a}^2(x')) F(x').
\end{eqnarray*} 
 We start with a given 
distribution $F_0$ on $E$ and have to compute 
$J_0(F_0)$.  The value function $J_n(F)$ describes the minimal sum of  the expected transportation cost and remaining distance of a  terminal distribution of the random walk to the target, starting at time $n$ till the terminal time $N.$

\begin{remark}[Connection to classical optimal transport]
 Note that when  all 
 $c_k\equiv  1,$ then $J_0(F_0)$ is exactly the 
Wasserstein distance to the distribution $G$, 
 also known as the earth-mover distance.  In order to see this, recall that the Wasserstein $W_1$ distance between two discrete distributions
   \[
        \mu=\sum_{j=1}^{K}\omega_{j}\delta_{j} \quad \text{ and }\quad  \nu=\sum_{i=1}^{K}\vartheta_i\delta_{i},
    \]
    where $\omega_j, \vartheta_i \geq 0 $, for all $i,j=1,\ldots,K$   and $\sum_{j=1}^{K}\omega_j=\sum_{i=1}^{K}\vartheta_i=1$ can be computed from the following linear program (see \cite{kantorovich2006problem})

   \begin{align}\label{Prob:LP}\tag{P}
    \min \quad   &\sum_{j=1}^{K} \sum_{i=1}^{K}q_{j,i}|j-i|
    \\ \notag
    \text{s.t.} \quad  &\sum_{i=1}^{K}q_{j,i}=\omega_j, \quad  j=1,\ldots, K ,
    \\ \notag
   &\sum_{j=1}^{K}q_{j,i}=\vartheta_i, \quad  i=1,\ldots,K,
    \\ \notag
    &   q_{j,i} \geq 0, \quad j=1,\ldots,K, \ i=1,\ldots, K.
 \end{align} 
This linear program can be interpreted as a problem of transporting probability mass from the  distribution $\mu$ to the distribution $\nu$ with
minimal cost, where the cost is given as the sum of 
single masses times transportation distance. In our problem formulation we can - within one step - only transport mass from one state to its neighbors, i.e., from 4 to 3 and 5, but not to 6. However, obviously for a given transport $q=q_{j,i}>0$ in $(P)$ with $j<i$ we can split the transport in $i-j$ transports of the form $q_{j,j+1}=q, q_{j+1,j+2}=q,\ldots, q_{i-1,i}=q. $ The cost will be identical, because $q_{j,i}|j-i| = q_{j,j+1}+q_{j+1,j+2}+\ldots + q_{i-1,i}.$ Thus, consider for a moment only transports with the additional restriction $q_{j,i}=0$ for $i\notin \{j+1,j-1\}.$ The corresponding cost of such a transport can be expressed by $\sum_{j=0}^{K-1}q_{j,j+1} + \sum_{j=1}^K q_{j,j-1}.$
Now consider one time step of our model, say from $n$ to $n+1$ and let $X_n\sim\mu$ and $X_{n+1}\sim \nu.$   We can realize the same transport in our model by choosing $\bar{a}^1(j)= \frac{q_{j,j+1}}{\omega_j}$ and $\bar{a}^2(j)= \frac{q_{j,j-1}}{\omega_j}$. Note that the (conditional) transition probabilities implied by a transport $(q_{j,i})$ in $(P)$ are given by $\PP(X_{n+1}=i|X_n=j)= \frac{q_{j,i}}{w_j}.$ The one-step cost of the transport if $c_k\equiv 1$ is $$\EE(A_n^1+A_n^2)=\sum_{j=1}^K \omega_j (\frac{q_{j,j+1}}{\omega_j} + \frac{q_{j,j-1}}{\omega_j})= \sum_{j=0}^{K-1}q_{j,j+1} + \sum_{j=1}^K q_{j,j-1}$$
and hence the cost are the same in both models. Thus,  the optimal transport of $(P)$ can be implemented by decomposing it in neighboring transports and the terminal cost given by the Wasserstein distance to the target.  The remaining cost is exactly given by the Wasserstein distance in the end.

However, this argument breaks if  $c_k\not\equiv 1$. In our problem formulation there is a time aspect: It is cheaper to move 
 mass early. In particular if $K=4$ and 
 $F_0=(\frac12,0,0,\frac12)$ and $G=(\frac12,
\frac12,0,0)$ and $N\ge 2$, then it is easy 
to see that $W_1(F_0,G)=1$ and $J_0(F_0)= 
\frac12(c_1+c_2)$. However, if   $F_0=(0,
\frac12,0,\frac12)$ and $G=(\frac12,0,
\frac12,0)$  then  again $W_1(F_0,G)=1$, but 
$J_0(F_0)= c_1$. Hence, we get the same $W_1$-
distance but different results in our 
optimization problem.
\end{remark}

\begin{theorem}\label{theo:OT}
The following statements hold  for the optimal policy:
\begin{itemize}
\item[a)] If at time point $n$  not all mass 
at $x$ is moved, then $x$w ill only receive 
mass at later time points and stay a sink.
Formally, for any $x\in E$  
$$ \bar a_n^1(x)+\bar a_n^2(x)< 1 \quad \Rightarrow\quad \bar a_m^1(x)+\bar a_m^2(x)=0,\, m\ge  n+1. $$
\item[b)]  A mass which has already been moved 
will keep its direction of moving.
    \end{itemize}
\end{theorem}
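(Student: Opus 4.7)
The strategy is an exchange argument at the level of absolute mass flows rather than proportional actions. For any policy with proportions $\bar a_n^1(x), \bar a_n^2(x)$, let $U_n(x):=\bar a_n^1(x)F_n(x)$ be the mass moved from $x$ to $x+1$ at time $n$ and $D_n(x):=\bar a_n^2(x)F_n(x)$ the mass moved from $x$ to $x-1$; these satisfy $U_n,D_n\ge 0$, $U_n(x)+D_n(x)\le F_n(x)$, and the recursion $F_{n+1}(x)=F_n(x)-U_n(x)-D_n(x)+U_n(x-1)+D_n(x+1)$ with the boundary conventions $U_n(0)=D_n(K+1)=0$. Conversely, any $(U,D)$ obeying these constraints induces an admissible policy through $\bar a_n^1(x)=U_n(x)/F_n(x)$ (resp.\ $\bar a_n^2$) whenever $F_n(x)>0$, so we may pass freely between the two descriptions. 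In the flow variables, \eqref{eq:prob_ex} becomes the linear functional $\sum_n c_n\sum_x(U_n(x)+D_n(x))+W_1(F_N,G)$ (the $W_1$-term is itself a linear function of the finitely-supported $F_N$), and small perturbations of the flows can be analyzed directly.

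\emph{Part (a).} Fix an optimal flow $(U^*,D^*)$ and suppose the hypothesis $F_n^*(x)-U_n^*(x)-D_n^*(x)>0$ holds but, for contradiction, $U_m^*(x)>0$ for some $m\ge n+1$ (the case $D_m^*(x)>0$ is symmetric). I consider the perturbation $\tilde U_n(x):=U_n^*(x)+\varepsilon$ and $\tilde U_m(x):=U_m^*(x)-\varepsilon$, with all other flows unchanged. Unwinding the recursion step by step yields $\tilde F_k(x)=F_k^*(x)-\varepsilon$ and $\tilde F_k(x+1)=F_k^*(x+1)+\varepsilon$ for $n<k\le m$, with $\tilde F_k=F_k^*$ outside this window; in particular $\tilde F_N=F_N^*$, so the terminal $W_1$-term is untouched. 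The transport cost changes by $(c_n-c_m)\varepsilon\le 0$. A strict inequality $c_n<c_m$ contradicts optimality outright; in the degenerate case $c_n=c_m$, the perturbed flow is another optimum with strictly smaller $U_m^*(x)$, and iterating drives $U_m^*(x)+D_m^*(x)$ to zero, yielding an optimum satisfying (a) at $(x,n)$.

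\emph{Part (b).} Suppose an optimal flow has $U_n^*(x)>0$ and $D_m^*(x+1)>0$ for some $m\ge n+1$ (the symmetric pair $D_n^*(x)>0,\,U_m^*(x-1)>0$ is handled identically). I perturb by $\tilde U_n(x):=U_n^*(x)-\varepsilon$ and $\tilde D_m(x+1):=D_m^*(x+1)-\varepsilon$. The analogous bookkeeping gives $\tilde F_k(x)=F_k^*(x)+\varepsilon$ and $\tilde F_k(x+1)=F_k^*(x+1)-\varepsilon$ for $n<k\le m$, $\tilde F_k=F_k^*$ otherwise, and in particular $\tilde F_N=F_N^*$; now the transport cost strictly decreases by $(c_n+c_m)\varepsilon>0$, which is an immediate contradiction with optimality.

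\emph{Main obstacle.} In both parts, feasibility of the perturbation at intermediate times $k\in(n,m)$ demands positive slack at the affected state, i.e.\ $F_k^*(\cdot)-U_k^*(\cdot)-D_k^*(\cdot)\ge\varepsilon$. If every intermediate time has positive slack, a sufficiently small $\varepsilon>0$ works. If the slack vanishes at some smallest intermediate index $k^*$, I replace the long shift $n\leftrightarrow m$ by the shorter localized shift $n\leftrightarrow k^*$: feasibility is automatic because the corresponding constraint is already tight at $k^*$, while the weak (resp.\ strict) cost improvement of part (a) (resp.\ (b)) still applies. An induction on a lexicographic measure of the violation---such as $\sum_{k,x}k\bigl(U_k^*(x)+D_k^*(x)\bigr)$---then shows the iteration terminates and produces an optimum satisfying both (a) and (b).
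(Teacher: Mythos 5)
Your proof is correct and rests on the same core idea as the paper's: an exchange argument that reschedules a later move to an earlier period and exploits the monotonicity $c_n\le c_m$ (for part a)) or cancels a forward move against a later backward move (for part b)). What you do differently is the formalization: by passing to absolute flows $U_n(x),D_n(x)$ you turn the objective into a linear functional of the flows, and you explicitly verify that your perturbations leave $F_N$ (hence the $W_1$-term) unchanged --- something the paper's proof only asserts implicitly when it compares the cost of transferring ``the same mass'' at periods $n$ and $m$. You also make two points the paper glosses over. First, when $c_n=c_m$ the exchange yields only a weak improvement, so the correct conclusion is that \emph{some} optimal policy satisfies a), not every one; your iteration to a canonical optimum is the honest fix, and the paper's ``then $\bar a_n^1(x),\bar a_n^2(x)$ would not be optimal'' is strictly speaking too strong under the stated hypothesis $c_0\le\cdots\le c_N$. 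Second, you identify the feasibility constraint at intermediate times and propose a localized shift when the slack vanishes. One caveat: your localized shift $n\leftrightarrow k^*$ needs a nonzero flow of the right type at $k^*$ to trade against (e.g.\ in part b) it needs $D_{k^*}(x+1)>0$, not merely zero slack at $x+1$), and the case where the relevant site is fully depleted at an intermediate time and later refilled by ``new'' mass passing through is not fully covered by your patch --- but this is precisely the case the paper itself waves away with ``without loss of generality older mass is moved first,'' so your argument is no less complete than the original and is considerably more precise where it does apply.
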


\begin{proof} Part a): 
Assume that $\bar a_n^1(x)+\bar a_n^2(x)< 1$
but $\bar a_m^1(x)+\bar a_m^2(x)>0$
for some $m\ge n+1.$ Note that the cost from the coordinate $x$ equals
$$c_nF_n(x)(\bar a_n^1(x)+\bar a_n^2(x))$$
and $F_{n+1}(x)\ge F_n(x)(1-\bar a_n^1(x)-\bar a_n^2(x)).$   Let $m\ge n+1$ be the smallest  number for which
$a_m^1(x)+a^2_m(x)>0,$ which implies that $F_m(x)\ge F_{n+1}(x).$ 
Suppose that it is optimal in period $m$ is to transfer at least the mass $F_n(x)(\alpha_1\bar a_n^1(x)+\alpha_2\bar a_n^2(x)),$ where $\alpha_1,\alpha_2\in [0,1)$ are numbers such that 
$$0<\alpha_1\bar a_n^1(x)+\alpha_2\bar a_n^2(x)\le 1-
\bar a_n^1(x)-\bar a_n^2(x).$$
The cost from $x$ in period $m$ is 
$$c_mF_m(x)(\bar a_m^1(x)+\bar a_m^2(x))\ge
c_m F_n(x)(\alpha_1\bar a_n^1(x)+\alpha_2\bar a_n^2(x)).
$$
But if the mass were transported at $n,$ then the cost would be 
 $$c_n F_n(x)(\alpha_1 \bar a_n^1(x)+\alpha_2 \bar a_n^2(x))\le c_m  F_n(x)(\alpha_1\bar a_n^1(x)+\alpha_2\bar a_n^2(x)).$$
 Hence, it is not optimal  to transfer the mass 
 $F_n(x)(\alpha_1 \bar a_n^1(x)+\alpha_2 \bar a_n^2(x))$ at period $m.$ 
 It is cheaper to transfer it at period $n$. Then, 
 $\bar a^1_n(x)$ and $\bar a^2_n(x)$ would not be optimal.
 Therefore, it must hold $a_m^1(x)=a^2_m(x)=0$ for 
 $m\ge n+1.$

 Even if $x$ receives new mass at later time 
points this will not be moved, because without 
loss of generality we can assume that older 
mass is moved first.


Part b): Moving a mass in one direction and 
back at a later time results in the same 
distribution and just produces costs. Hence, 
this cannot be optimal.
\end{proof}

Note. that the optimal policy does not depend on the precise value of the variables $c_n.$ However, the assumption $0<c_0\le c_1 \le \ldots \le c_N\le 1$ implies the structural properties in Theorem \ref{theo:OT}.
From these observations, we obtain the 
following algorithm: Suppose the state 
(distribution) at time $n$ is given by 
$(F_n(1),\ldots,F_n(K))$ and $(G(1),
\ldots,G(K))$ is the probability mass function 
of $G$. Define at point $x\in E:$
$$ \Delta F_n^{l}(x) := \sum_{j=1}^{x-1} 
F_n(j) - \sum_{j=1}^{x-1} G(j), \quad   \Delta 
F_n^{u}(x) := \sum_{j=x+1}^{K} F_n(j) - 
\sum_{j=x+1}^{K} G(j).$$
If $\Delta F_n^{l}(x) <0$, then in comparison 
to $G$ mass is missing left of $x$, if $\Delta 
F_n^{l}(x) =0,$ then  in comparison to $G$ the 
mass left of $x$ is sufficient, if $\Delta 
F_n^{l}(x) >0$, then in comparison to $G$ 
there is too much mass  left of $x$. The 
expression $\Delta F_n^{u}(x)$ has a similar 
explanation concerning the mass right of $x.$

The optimal amount of  mass which is moved can 
now be determined locally  at each point $x\in 
E$, just by knowing $\Delta F_n^{l}(x)$ and $
\Delta F_n^{u}(x).$ At every stage $n$, we 
have to go through all points $x\in E$ and do 
the following:

\begin{algorithm}
\caption{Optimal mass transportation algorithm}\label{alg:cap}
\begin{algorithmic}
\Require $n \geq 0$ and $(F_n(1),\ldots,F_n(K))$ 
\While{$n < N$} for all $x\in E$
\If{ $\Delta F_n^{l}(x)\ge0$ and $\Delta F_n^{u}(x)\ge0$}
    \State $a_n^1(x)=a_n^2(x)=0.$
\ElsIf{ $\Delta F_n^{l}(x)\ge0$ and $\Delta F_n^{u}(x)<0$}
 \State $a_n^1(x)=\min\{ F_n(x), -\Delta F_n^{u}(x)\}$.
 \State $a_n^2(x)=0$
\ElsIf{$\Delta F_n^{l}(x)<0$ and $\Delta F_n^{u}(x)\ge0$}
    \State $a_n^2(x)=\min\{ F_n(x), -\Delta F_n^{l}(x)\}$
     \State $a_n^1(x)=0$
\ElsIf{$\Delta F_n^{l}(x)<0$ and $\Delta F_n^{u}(x)<0$}
    \State $a_n^1(x)= -\Delta F_n^{u}(x)$
    \State  $ a_n^2 (x)=-\Delta F_n^{l}(x)$.
\EndIf
\State Update $F_{n+1}$
\State $n \gets n + 1$
\EndWhile
\end{algorithmic}
\end{algorithm}

In the situation of the last case ($\Delta F_n^{l}(x)<0$ and $\Delta F_n^{u}(x)<0$) it follows that $F_{n+1}(x) = G(x)$ and there  automatically will be enough mass at point $x$ to realize the move.

Also note that this situation can be extended to approximating a stochastic process in continuous time by a Markov chain such that the distance between the marginal distributions is minimized.

Let us conclude this special case by illustrating some numerical results obtained from implementing Algorithm~\ref{alg:cap}. First, we used a rescaled normal distribution on $E=\{1,\ldots,K\}$ as the target distribution, i.e., $G = (G(1),\ldots,G(K))$ for $G(j) = \varphi_{K/2,\sigma^2}(j)/\sum_{\ell = 1}^K \varphi_{K/2,\sigma^2}(\ell)$, where $\varphi_{K/2,\sigma^2}$ denotes the density function of a normal distribution with mean $K/2$ and variance $\sigma^2$, where we chose $\sigma\in \{0.5,1,2,5\}$. For the initial distribution, we used $m=100$ samples of probability distributions on $E$ obtained from sampling $K$ i.i.d. random variables uniformly distributed on $\{0,1,2,\ldots,10\}$ and appropriate scaling. For $K\in \{50,100\}$, we applied the algorithm for $N\in \{1,\ldots,K/2+5\}$ and determined the average Wasserstein distance of the resulting distribution to the target distribution $G$ over the $m=100$ sampled initial distributions. This seems more interesting and informative than the value $J_0$ of the optimization problem itself, since this is difficult to interpret.  The results are shown in Figure~\ref{fig:normal}. We observe a linear decrease of the Wasserstein distance in terms of the number of time steps $N$. Depending on the choice of $\sigma$, we notice some index $N_0 = N_0(\sigma)$ such that $H(F_N) \approx 0$ for $N\geq N_0$. It appears to be decreasing in terms of the standard deviation $\sigma$, but even for the smallest choice of $\sigma = 0.5$, only $K/2$ time steps are necessary to achieve an accurate approximation of the target distribution. Note that using $N\geq K-1$ time steps allows us to achieve any target distribution starting from any initial distribution due to the construction of the algorithm with the 'worst case' (i.e., the case with a necessary number of $N=K-1$ time steps) being $F_0 = (1,0,\ldots,0)$ and $G=(0,\ldots,0,1)$.

Figure~\ref{fig:boxplot} additionally shows boxplots generated from the Wasserstein distances of the target distribution $G$ for $K\in \{50,100\}$ and $\sigma \in \{0.5,1,2,5\}$ and the distribution $F_{30}$ obtained after performing the first $30$ steps of Algorithm~\ref{alg:cap} for the $m=100$ initial samples. We can again observe the smallest Wasserstein distance for the largest value of $\sigma$. Moreover, we notice that the dispersion of the Wasserstein distances is smallest for the largest $\sigma$ as well.

\begin{figure}[!t]
\centering  
\subfigure[$K=50$]{\label{fig:normal50}\includegraphics[width=0.48\textwidth]{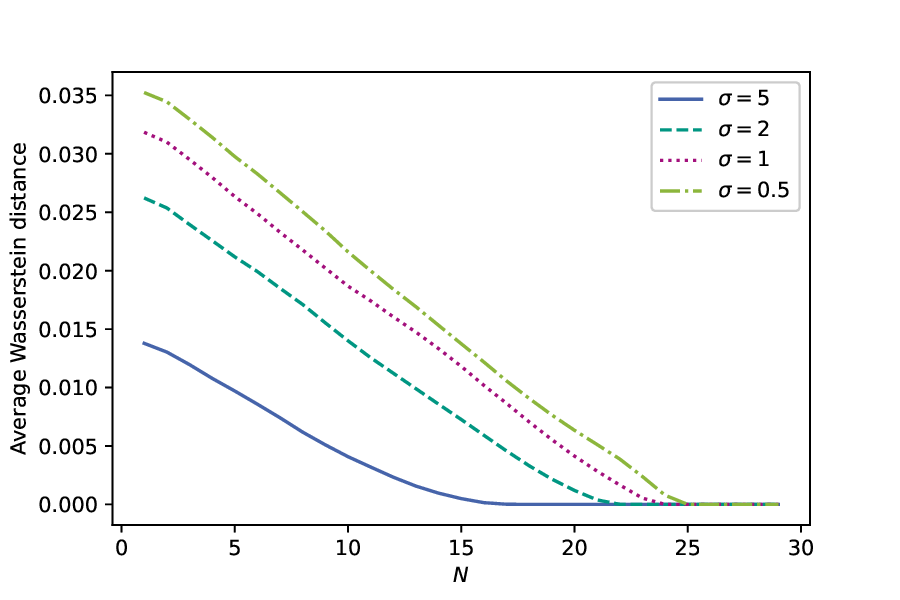}}
\subfigure[$K=100$]{\label{fig:normal100}\includegraphics[width=0.48\textwidth]{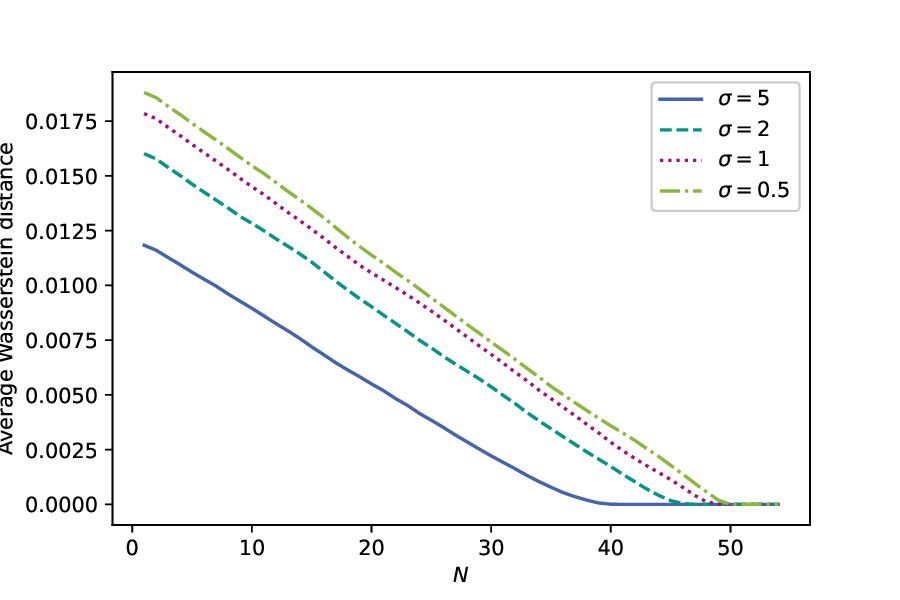}}
\caption{Average Wasserstein distance of $F_N$ from Algorithm~\ref{alg:cap} and the target distribution $G$ (rescaled normal distribution) over $m=100$ initial samples for $\sigma\in \{0.5,1,2,5\}$ and $K\in \{50,100\}$.}
\label{fig:normal}
\end{figure}

\begin{figure}[!t]
\centering  
\subfigure[$K=50,\, n = 15$]{\label{fig:box50}\includegraphics[width=0.48\textwidth]{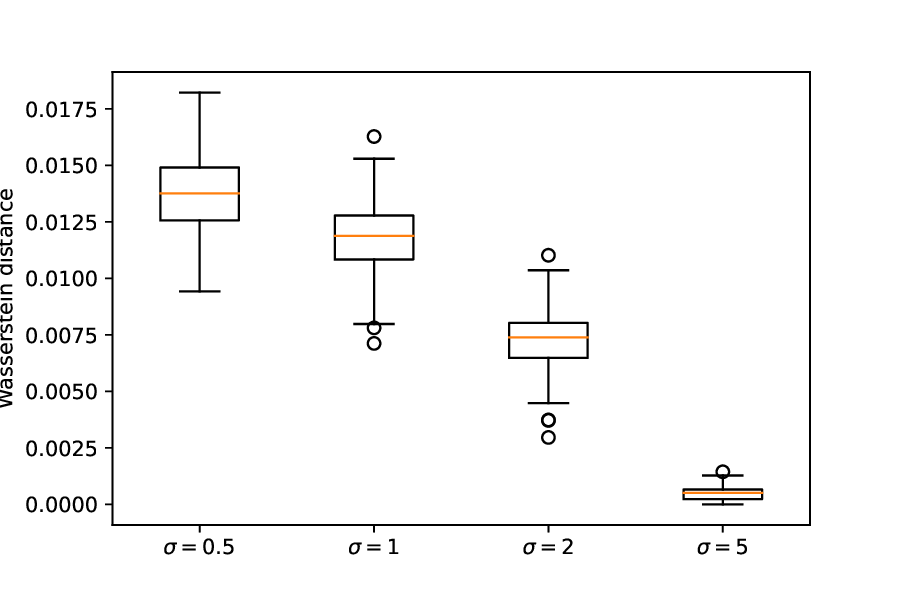}}
\subfigure[$K=100,\, n = 30$]{\label{fig:box100}\includegraphics[width=0.48\textwidth]{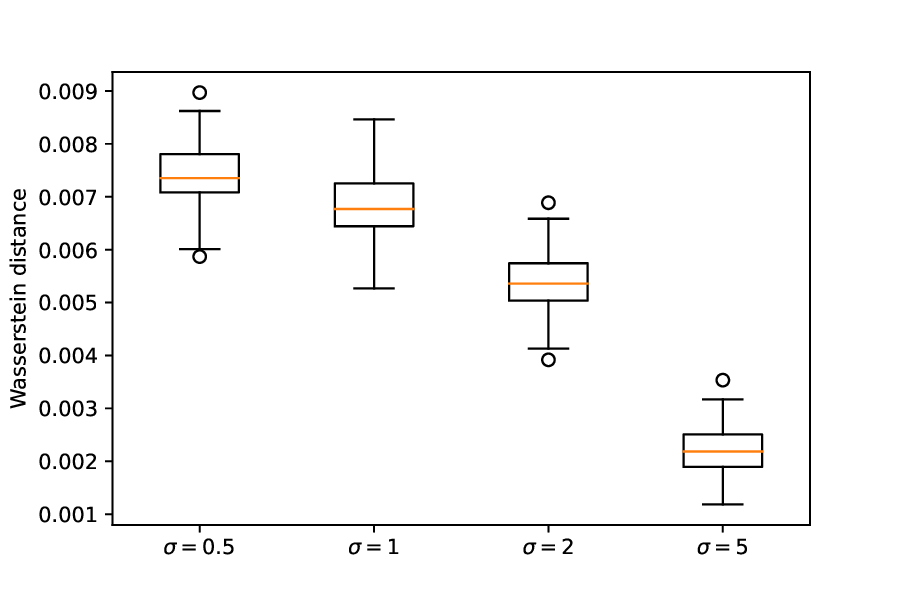}}
\caption{Boxplots of the Wasserstein distance of $F_n$ from Algorithm~\ref{alg:cap} and the target distribution $G$ (rescaled normal distribution) using the data from the $m=100$ initial samples for $\sigma\in \{0.5,1,2,5\}$.}
\label{fig:boxplot}
\end{figure}


\begin{figure}[!ht]
\centering  
\subfigure[$K=50$]{\label{fig:exp50}\includegraphics[width=0.48\textwidth]{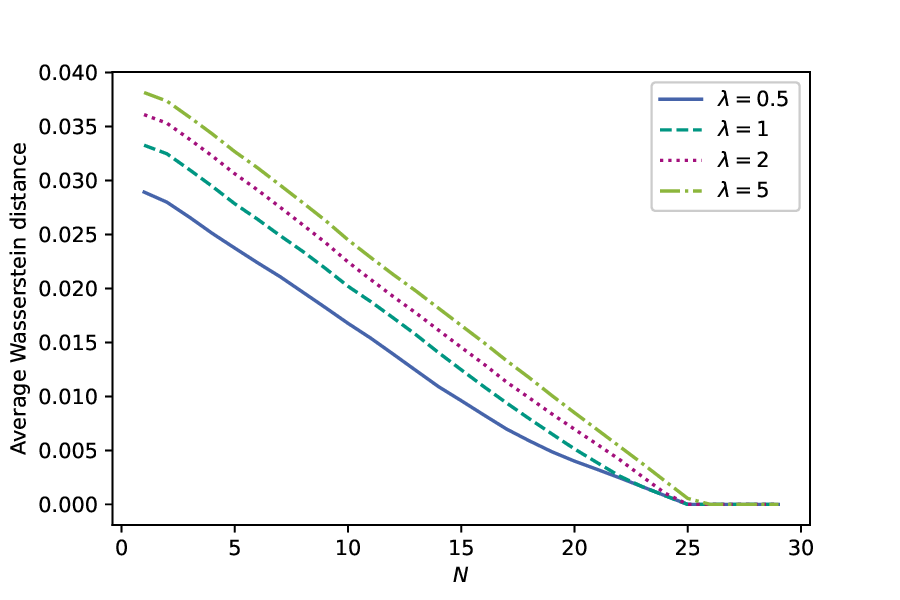}}
\subfigure[$K=100$]{\label{fig:exp100}\includegraphics[width=0.48\textwidth]{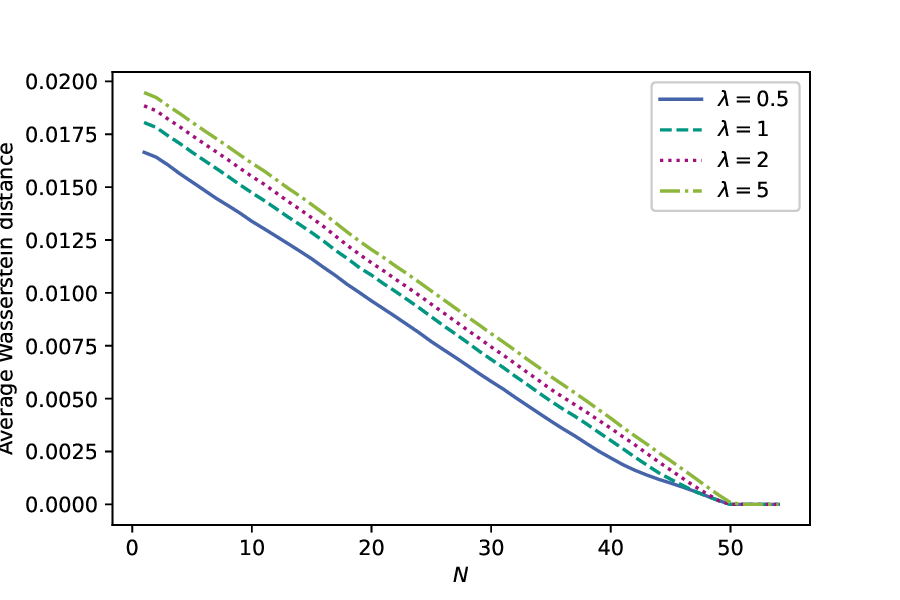}}
\caption{Average Wasserstein distance of $F_N$ from Algorithm~\ref{alg:cap} and the target distribution $G$ (rescaled shifted exponential distribution) over $m=100$ initial samples for $\lambda\in \{0.5,1,2,5\}$ and $K\in \{50,100\}$.}
\label{fig:exponential}
\end{figure}

For a second example, we used a rescaled shifted exponential distribution as the target distribution, i.e., $G = (G(1),\ldots,G(K))$ for $G(j) = f_\lambda(j)/\sum_{\ell = 1}^K f_\lambda(\ell)$, where $f_\lambda(x) = \lambda \mathrm{e}^{-\lambda (x-K/2)}$, $x>K/2$, denotes the density of an exponential distribution with parameter $\lambda > 0$ which was shifted to the right by $K/2$. We chose $\lambda \in \{0.5,1,2,5\}$ for our numerical example. The initial distributions are sampled in the same way as before. The resulting average Wasserstein distance of $F_N$ and $G$ over the $m=100$ initial samples $F_0$ is shown in Figure~\ref{fig:exponential}. Similarly to the case of a rescaled normal target distribution, we notice a linear decay of the average Wasserstein distance. Moreover, the index $N_0 = N_0(\lambda)$ with $H(F_N) \approx 0$ for $N\geq N_0$ seems to be located at $K/2$ for any choice of $\lambda$. However, for $N<N_0$, the average Wasserstein distance is larger for larger values of $\lambda$.

\subsection{Further Applications}
The approach we describe here offers maximal flexibility in shaping reward distributions to fit some targeted distributions. This is for example interesting in portfolio optimization where constraints have to be met (e.g., Value-at-Risk constraints, see \cite{basak2001value}) or the return distribution is shaped according to the risk preferences of the agent. There have been some applied studies in this direction \cite{brayman2023profile} and some theoretical  findings about the connection between target criterion and trading strategy \cite{cox2014utility} as well as solution techniques \cite{he2011portfolio}. This aspect could be interesting for personalized robo-advising \cite{capponi2022personalized} where AI tools choose portfolio strategies which match with the agents' preferences.

Another application, as indicated in the last subsection, is an optimal approximation of a continuous-time stochastic process $(X_t)$ (e.g. Brownian motion) by a discrete-time Markov chain $(\hat X_n)$ in the sense that at pre-determined time points $t_1 < t_2<\ldots < t_n$ the weighted distance $\sum_{j=1}^n\alpha_j W_1(X_{t_j},\hat X_{j})$ for $\alpha_j \ge 0$ has to be minimized. In this situation the transition probabilities of the discrete-time Markov chain have to be chosen. There may be constraints about the domain of the transition kernel, i.e., 
transitions to only certain values may be feasible.

More demanding applications include for example the optimal control of crowd behavior. Such models  are often formulated in terms of mean-field systems \cite{carmona2018probabilistic} where the individual behavior is modeled in relation to the empirical distribution of the other individuals. Individuals might be persons, cars, animals and so on. Often a certain distribution of those individuals is preferred for example to avoid congestion, to guarantee smooth exiting of a building, to obtain an optimal shape of a flock of birds etc. (see Sect. 1 in \cite{carmona2018probabilistic}). Thus, the aim is here to find a policy such that the distribution of the individuals is optimally shaped.

\section{Conclusion}
In this paper, we studied Markov Decision Processes where the objective consists of functionals of the distribution of the accumulated reward. We showed that these kind of problems can be formulated as a dynamic program by defining a lifted MDP. The corresponding Bellman equation yields an algorithm for solving these problems. We have seen that this approach comprises many well-studied problems and allows a different point of view on the traditional Bellman equation.

\bibliographystyle{apalike}
\bibliography{literature_DBE}
\end{document}